\theoremstyle{plain}
\newtheorem{theorem}{Theorem}[section]
\newtheorem{lemma}[theorem]{Lemma}
\theoremstyle{definition}
\newtheorem{definition}[theorem]{Definition}
\newtheorem{example}[theorem]{Example}
\newtheorem{proposition}[theorem]{Proposition}
\newcommand{\GL}{{\rm{GL}}}
\newcommand{\PGL}{{\rm{PGL}}}
\newcommand{\Ker}{{\rm{Ker}}}
\newcommand{\Hom}{{\rm{Hom}}}
\newcommand{\End}{{\rm{End}}}
\newcommand{\Lim}{\displaystyle \lim_{t\rightarrow 0}}
\newcommand{\PM}{\mathbb{P}M}
\newcommand{\Aut}{{\rm{Aut}}}
\newcommand{\Out}{{\rm{Out}}}
\begin{document}

\title{On $\mathbb{P}M$-mapping class monoids}
\author{Toshinori Miyatani}
\date{}
\address{Graduate School of Science, Hokkaido University, Sapporo, 004-0022, Japan}
\email{miyatani@math.sci.hokudai.ac.jp}

\begin{abstract}
In this paper, we introduce $\mathbb{P}M$-mapping class monoids. Braid groups and mapping class groups have many features in common. Similarly to the notion of braid $\mathbb{P}M$-monoid, $\mathbb{P}M$-mapping class monoid is defined. This construction is an analogy of inverse mapping class monoid defined by R. Karoui and V. V. Vershinin. As the main result, we give the analog of the Dehn-Nilsen-Baer theorem. 
\end{abstract}

\keywords{monoids; braid monoids; compactifications; mapping class groups: mapping class monoids.}

\maketitle
\tableofcontents

\section{Introduction}
Mapping class groups is an important object in many areas in mathematics. Analogically, we would like to define the notion of mapping class monoids. In \cite{M}, we introduced the $\mathbb{P}M$-monoids and braid $\mathbb{P}M$-monoids in the context of a compactification of the projective linear group defined by Mutsumi Saito \cite{S1}. We will introduce the notion of $\mathbb{P}M$-mapping class monoids and observe the relation to the braid $\mathbb{P}M$-monoid. The construction of $\mathbb{P}M$-mapping class monoid is an analogy of inverse mapping class monoid defined by R. Karoui and V. V. Vershinin \cite{V2}. There exists an important theorem in the theory of mapping class group called Dehn-Nilsen-Baer theorem. This is stated as follows: groups of isotopy class of homeomorphisms are described in terms of an automorphism of the fundamental group of the corresponding surface. As a main result, we will show Dehn-Nilsen-Baer theorem for $\mathbb{P}M$-mapping class monoids.

This paper is organized as follows. In Section 2.1, we explain the compactification of the projective linear group. In Section 2.2 and 2.3, we review mapping class groups and inverse mapping class monoids. In Section 4, we define $\mathbb{P}M$-mapping class monoids and calculate some examples. In Section 5.1 and 5.2 we review Dehn-Nilsen-Baer theorem for mapping class groups and inverse mapping class monoids. In Section 5.3, we prove the Dehn-Nilsen-Baer theorem for $\mathbb{P}M$-mapping class monoids.

\section{Compactification of $\PGL$ and $\mathbb{P}M$-monoids}

\subsection{Compactification of $\PGL$}
We explain the compactification of the projective linear group constructed by M. Saito \cite{S1}. 
\subsubsection{Motivation}
One strategy of compactification is constructing a ``limit''. Then we consider the set of all limit points and introduce a topology compatible with the limit. For instance Y. A. Neretin constructed a compactification of the projective linear group by this strategy called hinge \cite{N1}. 

Let $V$ be an $n$-dimensional vector space over $\mathbb{C}$ and $A_i\in \End(V)$, $(i=1,2,\dots)$. Suppose that the linear map
\begin{equation*} \label{Ae}
A_{\epsilon}:=\displaystyle \sum_{i=0}^{m} A_i \epsilon^i
\end{equation*}
is in $\GL(V)$ for $\epsilon\neq 0$. Dividing by nonzero scalar matrices we consider the projective linear map
\begin{equation} \label{Ae-}
\overline{A_{\epsilon}}\in \PGL(V).
\end{equation}
We want to define a ``limit'' $\displaystyle \lim_{\epsilon\rightarrow 0}\overline{A_{\epsilon}}$. To define a limit, we observe the action of $\overline{A_{\epsilon}}$ on $\mathbb{P}(V)$. For $\overline{x}\in\mathbb{P}(V)$ we have 
\begin{equation*}
\displaystyle \lim_{\epsilon\rightarrow 0}\overline{A_{\epsilon}(x)}=
\begin{cases}
\overline{A_0x} & (x\not\in \Ker A_0) \\
\overline{A_1x} & (x\not\in \Ker A_0\backslash \Ker A_1) \\
\overline{A_2x} & (x\not\in \Ker A_0\cap \Ker A_1\backslash \Ker A_2) \\
\,\,\,\,\vdots  & \,\,\,\,\,\,\,\,\,\,\,\,\,\,\,\,\,\,\,\,\,\,\,\,\,\,\,\,\,\,\,\,\,\,\,\,\,\,\,\,\,\,\,\,\,\,\,\,\,\,\,\,\,\,\,\,\,\,\,\,\,\,\,\,\,\,\,\,\,\,\,\,\,\,\,.
\end{cases}
\end{equation*}
Thus we define the limit of (\ref{Ae-}) as
\begin{equation} \label{li}
\displaystyle \lim_{\epsilon\rightarrow 0}\overline{A_{\epsilon}}:=(\overline{A_0},\overline{A_1}|_{\mathbb{P}(\Ker A_0)},\overline{A_2}|_{\mathbb{P}(\Ker A_0\cap \Ker A_1)},\dots).
\end{equation}
\subsubsection{Definition of $\mathbb{P}M$}
In order to construct a compactification of the projective linear group, we consider the set of forms of the right hand side of (\ref{li}). We define the following sets.
Let $V$ be an $n$- dimensional vector space over $\mathbb{C}$. Set
\begin{equation*}
M:=M(V)=\left\{(A_0,A_1,\dots,A_m)\mid 
\begin{matrix} m=0,1,2,\dots \\ 0\neq A_i\in \Hom(V_i,V)\,\, (0\le i\le m) \\ V_0=V, V_{m+1}=0 \\ V_{i+1}=\Ker (A_i)\,\,(0\le i\le m)
\end{matrix}
\right\}
\end{equation*}
and
\begin{equation*}
\widetilde{M}:=\widetilde{M}(V)=\left\{(A_0,A_1,\dots,A_m)\mid 
\begin{matrix} m=0,1,2,\dots \\ 0\neq A_i\in \End(V)\,\, (0\le i\le m) \\ \displaystyle \cap_{k=0}^{i-1}\Ker A_k\not\subseteq \Ker A_i \\ \displaystyle \cap_{k=0}^{m}\Ker A_k=0
\end{matrix}
\right\}.
\end{equation*}
Let $\mathcal{A}:=(A_0,A_1,\dots,A_m)\in M$. Since $A_i\in \Hom(V_i,V)\backslash \{0\}$, we can consider the element $\overline{A_i}\in \mathbb{P}\Hom(V_i,V)$ represented by $A_i$, and we can define
\begin{equation*}
\mathbb{P}\mathcal{A}:=(\overline{A_0},\overline{A_1},\dots,\overline{A_m}).
\end{equation*}
Let $\mathbb{P}M=\mathbb{P}M(V)$ denote the image of $M$ under $\mathbb{P}$. $\mathbb{P}\widetilde{M}$ can be defined similarly.

\subsubsection{Topology of $\mathbb{P}M$}
We introduce a topology in $\PM$ which we can deal with the limit (\ref{li}). We fix a Hermitian inner product on $V$. Let $W$ be a subspace of $V$. By considering $V=W\oplus W^{\bot}$ via this inner product, we regard $\Hom(W,V)$ as a subspace of $\End(V)$. We consider the classical topology in $\mathbb{P}\Hom(W,V)$ for any subspace $W$ of $V$. \\
Let $\mathbb{A}=(A_0,A_1,\dots,A_m)\in M$. Then $A_i\in\Hom(V_i,V)$, where $V_i=V(\mathbb{A})_i=\Ker(A_{i-1})$. Let $U_i$ be a neighborhood of $\overline{A_i}$ in $\mathbb{P}\Hom(V(\mathbb{A})_i, V)$. Then set 
\begin{equation} \label{nbd}
\begin{split}
U_{\mathbb{PA}}(U_0,\dots,U_m)
=&\Biggl\{\mathbb{PB}=(\overline{B_0},\overline{B_1},\dots,\overline{B_n}) \\ & \mid  
\begin{matrix} ^{\forall}i=1,\dots,m, ^{\exists}j\in \{1,\dots,n\} \text{ s.t. } \\ V(\mathbb{B})_j\supseteq V(\mathbb{A})_i \text{ and } 
\\ \overline{B_j|_{V(\mathbb{A})_i}}\in U_i
\end{matrix}
\Biggr\}.
\end{split}
\end{equation}
We will exhibit now the sets (\ref{nbd}) define a topology that can deal with the limit (\ref{li}) by using the following example. 
\begin{example}
Let V be a 4-dimensional vector space over $\mathbb{C}$. Taking the standard basis, we identify $V\cong \mathbb{C}^4$. Let 
\begin{equation*}
\mathbb{A}=(A_0,A_1,A_2,A_3)=\left(
\begin{pmatrix} 
1&0&0&0 \\
0&0&0&0 \\
0&0&0&0 \\
0&0&0&0
\end{pmatrix},
\begin{pmatrix}
0&0&0 \\
1&0&0 \\
0&0&0 \\
0&0&0
\end{pmatrix},
\begin{pmatrix}
0&0 \\
0&0 \\
1&0 \\
0&0
\end{pmatrix},
\begin{pmatrix}
0 \\
0 \\
0 \\
1
\end{pmatrix}
  \right),
\end{equation*}
\begin{equation*}
\mathbb{B}(t)=(B_0(t),B_1(t))=\left(
\begin{pmatrix} 
1&0&0&0 \\
0&t&0&0 \\
0&0&0&0 \\
0&0&0&0
\end{pmatrix},
\begin{pmatrix}
0&0 \\
0&0 \\
1&0 \\
0&t
\end{pmatrix}
  \right),
\end{equation*}
and let $U_i$ be a neighborhood of $A_i$, ($i=0,1,2,3$).  
In the rule of (\ref{li}), $\mathbb{B}(t)$ converges to $\mathbb{A}$ when $t\rightarrow 0$. In terms of (\ref{nbd}), we want to have 
\begin{equation} \label{wa}
\mathbb{B}(t)\in U_{\mathbb{PA}}(U_0,U_1,U_2,U_3)
\end{equation}
when $t<<0$. In fact, (\ref{wa}) holds by the following :
\begin{equation*}
V(\mathbb{B})_0\supseteq V(\mathbb{A})_0, \,\, \overline{B_0(t)|_{V(\mathbb{A})_0}}\in U_0 \text{ since }\Lim\overline{B_0(t)|_{V(\mathbb{A})_0}}=
\begin{pmatrix}
1&0&0&0 \\
0&0&0&0 \\
0&0&0&0 \\
0&0&0&0
\end{pmatrix} ,
\end{equation*}
\begin{equation*}
V(\mathbb{B})_0\supseteq V(\mathbb{A})_1, \,\,\overline{B_0(t)|_{V(\mathbb{A})_1}}\in U_1 \text{ since }\Lim\overline{B_0(t)|_{V(\mathbb{A})_1}}=
\begin{pmatrix}
0&0&0 \\
1&0&0 \\
0&0&0 \\
0&0&0
\end{pmatrix} ,
\end{equation*}
\begin{equation*}
V(\mathbb{B})_1\supseteq V(\mathbb{A})_2, \,\,\overline{B_1(t)|_{V(\mathbb{A})_2}}\in U_2 \text{ since }\Lim\overline{B_1(t)|_{V(\mathbb{A})_2}}=
\begin{pmatrix}
0&0 \\
0&0 \\
1&0 \\
0&0
\end{pmatrix} ,
\end{equation*}
\begin{equation*}
V(\mathbb{B})_1\supseteq V(\mathbb{A})_3, \,\,\overline{B_1(t)|_{V(\mathbb{A})_3}}\in U_3 \text{ since }\Lim\overline{B_1(t)|_{V(\mathbb{A})_3}}=
\begin{pmatrix}
0 \\
0 \\
0 \\
1
\end{pmatrix} .
\end{equation*}
\end{example}
In fact (\ref{nbd}) induces a topology on $\PM$ by the following lemma.
\begin{lemma}[\cite{S1} Lemma 3.2.]
The sets 
\begin{equation*}
\{U_{\mathbb{PA}}(U_0,\dots,U_m)|U_i \text{ is a neighborhood of } \overline{A_i}\,\,(0\le i\le m)\}
\end{equation*}
satisfy the axiom of a base of neighborhoods of $\mathbb{PA}$, and hence define a topology in $\PM$.
\end{lemma}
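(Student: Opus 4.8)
The plan is to verify the three classical Hausdorff neighborhood-base axioms for the assignment sending $\mathbb{PA}$ to the family $\{U_{\mathbb{PA}}(U_0,\dots,U_m)\}$; once these hold, the standard reconstruction theorem produces a unique topology on $\PM$ in which each such family is a fundamental system of neighborhoods of the corresponding point. Throughout I would lean on one structural fact about restriction maps: for nested subspaces $W'\subseteq W\subseteq V$ the linear map $\Hom(W,V)\to\Hom(W',V)$ given by restriction is continuous, hence induces a continuous map $\mathbb{P}\Hom(W,V)\dashrightarrow \mathbb{P}\Hom(W',V)$ on the open locus where the restriction is nonzero; moreover these projective restriction maps compose, so that restricting to $W$ and then to $W'$ agrees with restricting directly to $W'$.

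The first two axioms are routine. For membership, I would take $\mathbb{B}=\mathbb{A}$ and, for each index $i$, choose $j=i$ in (\ref{nbd}): since $V(\mathbb{A})_i\supseteq V(\mathbb{A})_i$ and $\overline{A_i|_{V(\mathbb{A})_i}}=\overline{A_i}\in U_i$, we obtain $\mathbb{PA}\in U_{\mathbb{PA}}(U_0,\dots,U_m)$. For the intersection axiom, given two basic neighborhoods $U_{\mathbb{PA}}(U_0,\dots,U_m)$ and $U_{\mathbb{PA}}(U_0',\dots,U_m')$, I would set $U_i''=U_i\cap U_i'$, again a neighborhood of $\overline{A_i}$ in $\mathbb{P}\Hom(V(\mathbb{A})_i,V)$, and read off directly from (\ref{nbd}) that $U_{\mathbb{PA}}(U_0'',\dots,U_m'')$ is contained in both.

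The substantive step is the third axiom, for which I would show the stronger statement that each $U_{\mathbb{PA}}(U_0,\dots,U_m)$ is a neighborhood of every point it contains. Fix $\mathbb{PB}\in U_{\mathbb{PA}}(U_0,\dots,U_m)$. By (\ref{nbd}), for each $i$ there is an index $j(i)$ with $V(\mathbb{B})_{j(i)}\supseteq V(\mathbb{A})_i$ and $\overline{B_{j(i)}|_{V(\mathbb{A})_i}}\in U_i$; in particular $B_{j(i)}|_{V(\mathbb{A})_i}\neq 0$, so $\overline{B_{j(i)}}$ lies in the domain of the projective restriction map to $\mathbb{P}\Hom(V(\mathbb{A})_i,V)$. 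Using continuity of that map I would pick a neighborhood $W^{(i)}$ of $\overline{B_{j(i)}}$ inside this domain whose image under restriction to $V(\mathbb{A})_i$ is contained in $U_i$, then set $W_k=\bigcap_{i:\,j(i)=k}W^{(i)}$ (a finite intersection, hence a neighborhood of $\overline{B_k}$) for each $k$ arising as some $j(i)$, and let $W_k$ be an arbitrary neighborhood of $\overline{B_k}$ otherwise. To check $U_{\mathbb{PB}}(W_0,\dots,W_n)\subseteq U_{\mathbb{PA}}(U_0,\dots,U_m)$, take $\mathbb{PC}$ in the former and fix $i$, putting $k=j(i)$; by (\ref{nbd}) there is $l$ with $V(\mathbb{C})_l\supseteq V(\mathbb{B})_k$ and $\overline{C_l|_{V(\mathbb{B})_k}}\in W_k\subseteq W^{(i)}$. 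Since $V(\mathbb{B})_k\supseteq V(\mathbb{A})_i$, transitivity of restriction gives $\overline{C_l|_{V(\mathbb{A})_i}}=\overline{(C_l|_{V(\mathbb{B})_k})|_{V(\mathbb{A})_i}}$, which lies in $U_i$ by the defining property of $W^{(i)}$, while $V(\mathbb{C})_l\supseteq V(\mathbb{A})_i$. As $i$ was arbitrary, $\mathbb{PC}\in U_{\mathbb{PA}}(U_0,\dots,U_m)$.

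I expect the main obstacle to be the bookkeeping inside this last axiom: the correspondence $i\mapsto j(i)$ between the strata of $\mathbb{A}$ and those of $\mathbb{B}$ is in general neither injective nor surjective, so a single index $k$ of $\mathbb{B}$ may have to simultaneously control several strata of $\mathbb{A}$ (hence the finite intersection defining $W_k$), and I must keep every $\overline{B_{j(i)}}$ and $\overline{C_l|_{V(\mathbb{B})_k}}$ inside the indeterminacy-free locus of the relevant projective restriction map so that the expressions $\overline{C_l|_{V(\mathbb{A})_i}}$ remain defined. Both issues dissolve once the continuity-and-composition property of restriction recorded at the outset is in hand.
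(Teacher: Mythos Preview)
The paper does not supply its own proof of this lemma: it is quoted verbatim from \cite{S1} (Lemma~3.2 there) and used as a black box. There is therefore nothing in the present paper to compare your argument against.

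On its own merits your proof is correct and is exactly the expected verification of the neighborhood-base axioms. The only nontrivial axiom is the third, and your handling of it is sound: the key points---that the projective restriction $\mathbb{P}\Hom(W,V)\dashrightarrow\mathbb{P}\Hom(W',V)$ is continuous on the open locus where it is defined, that these maps compose transitively along nested subspaces, and that the correspondence $i\mapsto j(i)$ may be many-to-one so that $W_k$ must be taken as a finite intersection---are all identified and used correctly. Your care in keeping $W^{(i)}$ inside the domain of the restriction map guarantees that $\overline{C_l|_{V(\mathbb{A})_i}}$ is well defined at the end, closing the one place where the argument could otherwise have slipped.
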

Moreover the following theorem holds.
\begin{theorem}[\cite{S1} Theorem 5.1., Proposition 3.9, 3.10.]
The set $\PM$ is compact, and $\PGL(V)$ is dense open in $\PM$.
\end{theorem}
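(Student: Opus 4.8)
The plan is to prove the three assertions in turn: that $\PGL(V)$ is open in $\PM$, that it is dense, and that $\PM$ is compact. Throughout I use that $\PGL(V)$ is exactly the set of length-$0$ elements $\mathbb{PA}=(\overline{A_0})$: for such an element $V_1=\Ker A_0=V_{m+1}=0$ forces $A_0$ invertible, and conversely every invertible map is a length-$0$ element.

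Openness is read off directly from the neighborhood basis (\ref{nbd}). Given $\overline{A_0}\in\PGL(V)$, choose a neighborhood $U_0$ of $\overline{A_0}$ in $\mathbb{P}\End(V)$ disjoint from the (projectively well-defined, closed) locus $\{\det=0\}$, so that every class in $U_0$ is invertible. Then $U_{\mathbb{PA}}(U_0)\subseteq\PGL(V)$: if $\mathbb{PB}=(\overline{B_0},\dots,\overline{B_n})$ lies in it, then applying (\ref{nbd}) with $i=0$ gives some $j$ with $V(\mathbb{B})_j\supseteq V(\mathbb{A})_0=V$, which forces $j=0$ and $V(\mathbb{B})_0=V$ because the flag $V(\mathbb{B})_\bullet$ is strictly decreasing; since $\overline{B_0}\in U_0$ is invertible, $V(\mathbb{B})_1=\Ker B_0=0$ and $\mathbb{B}=(\overline{B_0})\in\PGL(V)$.

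For density, fix $\mathbb{A}=(A_0,\dots,A_m)\in M$ and a basic neighborhood $U_{\mathbb{PA}}(U_0,\dots,U_m)$, and construct an invertible $C$ inside it. Using the fixed inner product, write $V=\bigoplus_{j=0}^m W_j$ orthogonally with $V_i=\bigoplus_{k\ge i}W_k$, let $\widetilde{A_j}\in\End(V)$ agree with $A_j$ on $W_j$ and vanish on $\bigoplus_{k\ne j}W_k$, and note $A_j|_{W_j}$ is injective since $W_j\cap\Ker A_j=W_j\cap V_{j+1}=0$. Pick scales $1=t_0\gg t_1\gg\cdots\gg t_m\to 0$; since invertible maps are dense in $\End(V)$, for each small $t$ choose $P_t$ with $\|P_t\|\le t_m^2$ so that $C_t:=\sum_{j=0}^m t_j\widetilde{A_j}+P_t$ is invertible. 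For each $i$ one has $(\sum_j t_j\widetilde{A_j})|_{V_i}=\sum_{k\ge i}t_k A_k|_{W_k}$, whose leading term is $t_i A_i|_{W_i}$; since $\|P_t\|/t_i\le t_m\to 0$, we get $t_i^{-1}C_t|_{V_i}\to A_i|_{V_i}$ in $\Hom(V_i,V)$ and hence $\overline{C_t|_{V_i}}\to\overline{A_i}$. Writing $\mathbb{C}_t=(C_t)$, so that $V(\mathbb{C}_t)_0=V\supseteq V_i$ for all $i$, we conclude $\mathbb{PC}_t\in U_{\mathbb{PA}}(U_0,\dots,U_m)$ for small $t$, with $C_t\in\GL(V)$. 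The one delicate point here is invertibility: the images $A_j(W_j)$ need not be independent, and it is exactly the generic finer-scale perturbation $P_t$ that repairs this while leaving every projective limit $\overline{C_t|_{V_i}}$ unchanged.

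For compactness I would argue by sequential compactness, $\PM$ being second countable as it is assembled from finitely many Grassmannians and projective spaces. Given a sequence $\mathbb{PA}^{(k)}$, first pass to a subsequence along which the length and the dimensions $\dim V_i^{(k)}$ are constant and each $V_i^{(k)}$ converges in the compact Grassmannian, and then to a further subsequence along which each $\overline{A_i^{(k)}}$ converges in the compact space $\mathbb{P}\End(V)$. It remains to assemble these limits into an element of $M$ and to check convergence in the topology (\ref{nbd}). This last step is the main obstacle: in the limit, consecutive subspaces of the flag may collapse and a class $\overline{A_i}$ may degenerate, so that additional finer levels must be recovered from the subleading behavior of the maps --- the same multi-scale phenomenon as in the density argument, now run in reverse --- and one must verify that the resulting data genuinely satisfies the kernel conditions $V_{i+1}=\Ker A_i$ defining $M$. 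Equivalently, one may embed $\PM$ into a finite product of flag varieties and copies of $\mathbb{P}\End(V)$ and show the image is closed, where this collapsing-of-levels analysis is again the crux.
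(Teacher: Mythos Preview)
The paper does not itself prove this theorem; it is quoted from Saito \cite{S1} (Theorem 5.1 and Propositions 3.9, 3.10 there), so there is no in-paper argument to compare against. I can only assess your proposal on its own terms.

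Your openness and density arguments are correct. For openness, the key observation that $V(\mathbb{B})_j\supseteq V$ forces $j=0$ is exactly right, and then $\overline{B_0}\in U_0$ finishes it. For density, the construction $C_t=\sum_j t_j\widetilde{A_j}+P_t$ is precisely the motivating limit (\ref{li}) run backwards; the verification that $t_i^{-1}C_t|_{V_i}\to A_i$ goes through because $A_i$ already vanishes on $V_{i+1}=\bigoplus_{k>i}W_k$, so the block map you build agrees with $A_i$ on all of $V_i$, not just $W_i$. The small generic perturbation $P_t$ is genuinely needed in general, since the images $A_j(W_j)$ need not span $V$.

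The compactness argument, however, has a real gap. You set up sequential compactness and correctly isolate the obstacle: after passing to a subsequence with fixed length and fixed dimensions $\dim V_i^{(k)}$, the limiting map $A_i^\infty$ may acquire extra kernel beyond $V_{i+1}^\infty$, or even vanish on $V_i^\infty$ entirely, so the naive tuple $(\overline{A_0^\infty},\dots,\overline{A_m^\infty})$ need not lie in $M$. But you stop at naming the difficulty. Saying that ``additional finer levels must be recovered from the subleading behavior'' describes what must happen without doing it: one has to actually extract those subleading terms (e.g.\ by rescaling $A_i^{(k)}-c_kA_i^\infty$ appropriately and iterating), prove the process terminates in at most $\dim V$ steps, show the assembled tuple satisfies the kernel conditions defining $M$, and then verify convergence in the topology (\ref{nbd}). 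That is the entire content of the compactness proof, and it is absent. Your alternative route --- embedding $\PM$ into a product of flag varieties and projective spaces and showing the image is closed --- runs into the same wall: the condition $\Ker A_i=V_{i+1}$ is not a closed condition (it is ``rank exactly $\dim V_i-\dim V_{i+1}$''), so closedness of the image is not automatic and requires the same multi-scale analysis you have deferred.
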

Here we regard an element of $\PGL(V)$ as a one-term element of $\PM$, and $\PM$ is a compactification of $\PGL(V)$. 
\subsubsection{Monoid structure of $\mathbb{P}\widetilde{M}$}
For $\mathbb{A}=(A_0,A_1,\dots,A_m)$, $\mathbb{B}=(B_0,B_1,\dots,B_n)$$\in\mathbb{P}\widetilde{M}$, define $\mathbb{AB}$ by removing the redundant matrices from
\begin{equation} \label{prod}
\begin{split}
\mathbb{AB}=(A_0B_0,A_1B_0,\dots,&A_mB_0,A_0B_1, \\
&\dots,A_mB_1,\dots,A_0B_n,\dots,A_mB_n).
\end{split}
\end{equation}
This defines a monoid structure on $\mathbb{P}\widetilde{M}$ (\cite{S1} Proposition 6.6.).
\subsection{$\mathbb{P}M$-monoids}
We next review a $\mathbb{P}M$-monoid defined in \cite{M}. 
Let $T$ be a maximal torus of $\PGL_n$. Then we consider the following monoid 
\begin{equation*}
\mathscr{R}_n=\overline{N_{\PGL_n}(T)}/T,
\end{equation*}
where the closure is taken in the topology of $\mathbb{P}M$. We call this monoid $\mathscr{R}_n$ a $\mathbb{P}M$-monoid. The structure of a $\PM$-monoid can be described in terms of a matched pairs. We first explain a matched pairs (cf. \cite{MS},\cite{T}). Let $S$ be a monoid. We denote the unit element of $S$ by $1_S$.
\begin{definition}
Let $S, B$ be monoids which have binary operations $\rightharpoonup:S\times B\rightarrow B$ and $\leftharpoonup:S\times B\rightarrow S$. A matched pair of monoids means a triple $(S,B,\sigma)$, where $S,B$ are monoids and
\begin{equation*}
\sigma:S\times B\rightarrow B\times S, \, (s,b)\mapsto (s\rightharpoonup b,s\leftharpoonup b)
\end{equation*}
is a map satisfying the following conditions : 
\begin{enumerate}
\item[(1)] \label{m1}
$s\rightharpoonup(t\rightharpoonup b)=st\rightharpoonup b$,
\item[(2)] \label{m2}
$st\leftharpoonup b=(s\leftharpoonup(t\rightharpoonup b))(t\leftharpoonup b)$,
\item[(3)] \label{m3}
$(s\leftharpoonup b)\leftharpoonup c=s\leftharpoonup bc$,
\item[(4)] \label{m4}
$s\rightharpoonup bc=(s\rightharpoonup b)((s\leftharpoonup b)\rightharpoonup c)$,
\item[(5)] \label{m5}
$1_S\rightharpoonup b=b$,
\item[(6)] \label{m6}
$s\rightharpoonup 1_B=1_B$,
\item[(7)] \label{m7}
$s\leftharpoonup 1_B=s$,
\item[(8)] \label{m8}
$1_S\leftharpoonup b=1_S$
\end{enumerate}
for $s,t\in S$, $b,c\in B$.
\end{definition}
The product $B\times S$ forms a monoid with product
\begin{equation*}
(b,s)(c,t)=(b(s\rightharpoonup c),(s\leftharpoonup c)t).
\end{equation*}
This monoid is denoted by $B\Join_{\sigma} S$. \\
Let
\begin{equation*}
\begin{split}
P_n=\Biggl\{(\{i_1,\dots,i_{k_1}\},\{i_{k_1+1},\dots,i_{k_2}\},&\dots,\{i_{k_m+1},\dots,i_n\}) \\
&\mid \begin{matrix}\{i_1\dots,i_n\}=\{1,\dots,n\} \\ 1\le k_1<k_2<\dots<k_{m-1}< n\end{matrix} \Biggr\}.
\end{split}
\end{equation*}
An element of $P_n$ is called an ordered set partitions of $[n]:=\{1,2,\dots,n\}$. The set $P_n$ has a monoid structure defined by
\begin{equation*}
(p_1,\dots,p_m)*(p_1^{\prime},\dots,p^{\prime}_{m^{\prime}}):=(p_1\cap p_1^{\prime},\dots,p_m\cap p_1^{\prime},\dots,p_1\cap p^{\prime}_{m^{\prime}},\dots,p_m\cap p^{\prime}_{m^{\prime}}).
\end{equation*}
Then the following proposition holds.
\begin{proposition}[\cite{M} Proposition 3.5] \label{rm}
Let $\mathscr{R}_n$ be the $\PM$-monoid, $S_n$ the symmetric group and $P_n$ the collection of the ordered set partitions of $[n]$. Define a map
\begin{equation*}
\varphi:P_n\times S_n\rightarrow S_n\times P_n,\, ((p_1,\dots,p_m),w)\mapsto (w,(w^{-1}(p_1),\dots,w^{-1}(p_m))).
\end{equation*}
Then
\begin{equation*}
\mathscr{R}_n\simeq S_n\Join_{\varphi} P_n.
\end{equation*}
\end{proposition}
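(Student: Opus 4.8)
The plan is to realize every element of $\mathscr{R}_n=\overline{N_{\PGL_n}(T)}/T$ as the $\PM$-limit of a family of monomial matrices, and to read off from such a limit a permutation together with an ordered set partition. Fix the diagonal torus $T$, write $P_w$ for the permutation matrix of $w\in S_n$ (so $P_we_j=e_{w(j)}$), and for $S\subseteq[n]$ let $E_S=\mathrm{diag}(\chi_S(1),\dots,\chi_S(n))$ be the coordinate idempotent onto $\mathrm{span}\{e_i\mid i\in S\}$. A point of $N_{\PGL_n}(T)$ is projectively a monomial matrix $P_w\,\mathrm{diag}(d_1,\dots,d_n)$; degenerating the diagonal part along $\mathrm{diag}(t^{a_1},\dots,t^{a_n})$ and grouping the indices by the value of $a_i$ produces, in the sense of the limit (\ref{li}), the element $(P_wE_{S_1},\dots,P_wE_{S_m})$, where $S_1,\dots,S_m$ are the index groups listed in the order in which they survive the degeneration. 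First I would check that this tuple lies in $\mathbb P\widetilde M$: since $\bigcap_{k<j}\Ker(P_wE_{S_k})=\mathrm{span}\{e_i\mid i\notin S_1\cup\dots\cup S_{j-1}\}$, the chain conditions $\bigcap_{k<j}\Ker A_k\not\subseteq\Ker A_j$ and $\bigcap_k\Ker A_k=0$ hold precisely because the $S_j$ partition $[n]$, and after passing to the quotient by $T$ the actual eigenvalues $d_i$ are forgotten, leaving only the ordered partition $p=(S_1,\dots,S_m)$.

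This produces a map $\Psi\colon S_n\times P_n\to\mathscr{R}_n$, $(w,p)\mapsto\overline{(P_wE_{S_1},\dots,P_wE_{S_m})}$, and the next step is to prove $\Psi$ is a bijection. Injectivity is clear, as $w$ is recovered as the common support permutation of the matrices $P_wE_{S_j}$ and the ordered partition is recovered from their supports. For surjectivity I would argue that any family in $N_{\PGL_n}(T)$ converging in $\PM$ has its limit governed by a well-defined permutation $w$ (recorded by the leading monomial pattern) together with the relative growth rates of the diagonal entries, and that by the description (\ref{nbd}) of the topology every such limit is of the displayed form for some ordered set partition. The hard part will be exactly this identification of the closure: one must show that the $\PM$-limits of diagonal one-parameter subgroups are exhausted by the coordinate-idempotent tuples $(E_{S_1},\dots,E_{S_m})$, and that passing to the quotient by $T$ introduces no identifications beyond forgetting the eigenvalues.

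Finally I would verify that $\Psi$ intertwines the $\PM$-product (\ref{prod}) with the matched-pair product of $S_n\Join_\varphi P_n$. Writing $\mathbb A=(P_{w_1}E_{S_1},\dots,P_{w_1}E_{S_m})$ and $\mathbb B=(P_{w_2}E_{T_1},\dots,P_{w_2}E_{T_{m'}})$, the crucial ingredient is the conjugation identity $E_SP_w=P_wE_{w^{-1}(S)}$, which gives
\begin{equation*}
(P_{w_1}E_{S_j})(P_{w_2}E_{T_l})=P_{w_1}P_{w_2}E_{w_2^{-1}(S_j)}E_{T_l}=P_{w_1w_2}\,E_{w_2^{-1}(S_j)\cap T_l}.
\end{equation*}
By (\ref{prod}) these factors are listed with the index $l$ of $\mathbb B$ outermost and the index $j$ of $\mathbb A$ innermost, and deleting the redundant (zero) factors leaves the ordered partition $(w_2^{-1}(S_1)\cap T_1,\dots,w_2^{-1}(S_m)\cap T_1,\dots,w_2^{-1}(S_1)\cap T_{m'},\dots,w_2^{-1}(S_m)\cap T_{m'})$, which is exactly $w_2^{-1}(p)*q$, with permutation part $w_1w_2$. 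Since $\varphi$ encodes $p\rightharpoonup w_2=w_2$ and $p\leftharpoonup w_2=w_2^{-1}(p)$, this is the matched-pair product $(w_1,p)(w_2,q)=(w_1(p\rightharpoonup w_2),(p\leftharpoonup w_2)*q)$. Together with the routine verification that $\Psi$ preserves the unit, this yields $\mathscr{R}_n\simeq S_n\Join_\varphi P_n$. The only delicate bookkeeping is matching the outer/inner ordering of the list in (\ref{prod}) against the order defining $*$; once these loops are aligned, the twist by $w_2^{-1}$ emerges automatically from the conjugation identity.
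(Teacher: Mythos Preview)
The paper does not actually prove this proposition: it is quoted from the author's earlier preprint \cite{M} (Proposition~3.5 there), and no argument is reproduced here. So there is no ``paper's own proof'' to compare against.

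That said, your proposal is a correct and natural argument. The key computation---the conjugation identity $E_SP_w=P_wE_{w^{-1}(S)}$ yielding
\[
(P_{w_1}E_{S_j})(P_{w_2}E_{T_l})=P_{w_1w_2}\,E_{w_2^{-1}(S_j)\cap T_l},
\]
together with your check that the inner/outer ordering in (\ref{prod}) matches the ordering in the $*$-product on $P_n$---is exactly what is needed to see that $\Psi$ is multiplicative, and your unpacking of $\varphi$ into $p\rightharpoonup w=w$, $p\leftharpoonup w=w^{-1}(p)$ is correct. The one place you are a bit breezy is surjectivity: you should be explicit that in any convergent family of monomial matrices the permutation pattern is eventually constant (since $S_n$ is discrete inside $N_{\PGL_n}(T)/T$), reducing the problem to the closure of $T$ itself in $\PM$, where your one-parameter degeneration $\mathrm{diag}(t^{a_1},\dots,t^{a_n})$ argument then shows that every boundary point is an idempotent tuple $(E_{S_1},\dots,E_{S_m})$ for a unique ordered set partition. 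With that clarification the argument is complete.
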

The $\mathbb{P}M$-monoid has a presentation by generators and relations. We first define some notations. We denote
\begin{equation} \label{par}
(k_1,\dots,k_{m-1})=(\{1,\dots,k_1\},\dots,\{k_{m-1}+1,\dots,n\}).
\end{equation} 
For $i=1,\dots,n-1$ and a partition $(k_1,\dots,k_{m-1})$ (cf. (\ref{par})), if there exists $j\in \{1,\dots,n\}$ such that $\{i,i+1\}\subseteq\{k_{j-1}+1,\dots,k_j\}$, then we set 
\begin{equation*}
i_{*}:=j.
\end{equation*} 
For $\sigma\in S_n$ we define a map $\varphi_{\sigma}:P_n\rightarrow P_n$ by
\begin{equation*}
(p_1,\dots,p_m)\mapsto (\sigma^{-1}(p_1),\dots,\sigma^{-1}(p_m)).
\end{equation*}
We define a set 
\begin{equation*}
\Pi_n=\{(k_1,\dots,k_{m-1}):1\le k_1<\dots<k_{m-1}< n\},
\end{equation*}
where $(k_1,\dots,k_{m-1})$ is $(\ref{par})$.
For $p\in P_n$, take an element $w\in S_n$ such that $wpw^{-1}\in \Pi_n$, and set
\begin{equation*}
u^{w}(p):=wpw^{-1}\in \Pi_n.
\end{equation*}
We also set
\begin{equation*}
{\rm{Ad}}(\sigma)(e):=\sigma^{-1}e\sigma.
\end{equation*}
Using these notations we obtain the following monoid presentation of the $\PM$-monoid $\mathscr{R}_n$.
\begin{proposition}[\cite{M} Proposition 3.8] \label{rep}
The $\PM$-monoid $\mathscr{R}_n$ has a monoid presentation with generating set
\begin{equation*}
\{s_1,\dots,s_{n-1},e_{k_1,\dots,k_{m-1}}\,\, (1\le k_1<\dots<k_{m-1}< n)\}
\end{equation*}
and defining relations
\begin{align}
s_i^2&=1  &\,\,(&1\le i\le n-1),  \label{re1} \\
s_is_j&=s_js_i  &(&1\le i,j \le n-1,\,|i-j|\ge 2), \label{re2}   \\
s_is_{i+1}s_i&=s_{i+1}s_is_{i+1}  &(&1\le i\le n-1), \label{re3} 
\end{align}
\begin{align}
&\,\,\,\,\,\,\,\,\,\,\,\,\,\,\,\,\,\,\,\,\,\,\,\,\,\,\,\,\,\,\,\,\,\, e_{k_1,\dots,k_{i_{*}},\dots,k_{m-1}}s_i=s_ie_{k_1,\dots,k_{i_{*}},\dots,k_{m-1}} \label{re4}  \\ 
&\,\,\,\,\,\,\,\,\,\,\,\,\,\,\,\,\,\,\,\,\,\,\,\,\,\,\,\,\,\,\,\,\,\,\,\,\,\,\,\,\,\,\,\,\,\,\,\,\,\,\,\, \begin{pmatrix}
1\le i\le n-1 \\
1\le k_1<\dots<k_{i_{*}}<\dots<k_{m-1}< n
\end{pmatrix}, \notag \\
&\,\,\,\,\,\,\, e_{k_1,\dots,k_{m-1}}s_{i_1}\dots s_{i_r}e_{l_1,\dots,l_{m^{\prime}-1}}
={\rm{Ad}}(s_{j_1}\dots s_{j_t})(e_{q})s_{i_1}\dots s_{i_r}  \label{re5} \\
&\,\,\,\,\,\,\,\,\,\,\,\,\,\,\,\,\,\,\,\,\,\,\,\,\,\,\,\,\,\,\,\,\,\,\,\,\,\,\,\,\,\,\,\,\,\,\,\,\,\,\,\, \begin{pmatrix}
1\le k_1<\dots<k_{m-1}< n \\
1\le l_1<\dots<l_{m^{\prime}-1}< n \\
\{i_1,i_1+1\}\nsubseteq \{k_{l-1}+1,\dots,k_l\},  ^{\forall}l=1,\dots,n \\
q=u^{s_{j_1}\dots s_{j_t}}((k_1,\dots,k_{m-1})*\varphi_{(s_{i_1}\dots s_{i_r})^{-1}}((l_1,\dots,l_{m^{\prime}-1}))) \notag
\end{pmatrix}.
\end{align}
\end{proposition}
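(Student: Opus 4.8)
The plan is to deduce the presentation from the isomorphism $\mathscr{R}_n\simeq S_n\Join_{\varphi}P_n$ of Proposition \ref{rm} by the standard two–step scheme: first produce a surjection from the abstractly presented monoid onto $\mathscr{R}_n$, then bound the number of normal forms above by $|\mathscr{R}_n|=n!\,|P_n|$. Throughout I identify the generators with $s_i\leftrightarrow(s_i,1_{P_n})$ and $e_{k_1,\dots,k_{m-1}}\leftrightarrow(1_{S_n},(k_1,\dots,k_{m-1}))$, where $(k_1,\dots,k_{m-1})$ is the standard partition (\ref{par}); note that the generator with empty index set (the case $m=1$) is the identity $(1_{S_n},1_{P_n})$. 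Writing an element as a pair $(w,p)$, the matched–pair product specializes, since $p\rightharpoonup w=w$ and $p\leftharpoonup w=\varphi_w(p)$, to $(w,p)(w',p')=(ww',\varphi_{w'}(p)*p')$. I will use repeatedly that $\varphi_a\circ\varphi_b=\varphi_{ba}$, that each $\varphi_a$ is an endomorphism of $(P_n,*)$ because $a^{-1}$ distributes over the block intersections defining $*$, and that $u^w(p)=\varphi_{w^{-1}}(p)$ by the definition of $u^w$.

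Checking the relations is then mechanical. Relations (\ref{re1})–(\ref{re3}) hold because $w\mapsto(w,1_{P_n})$ is a monoid homomorphism carrying the Coxeter presentation of $S_n$. For (\ref{re4}), if $\{i,i+1\}$ lies in a single block of $P=(k_1,\dots,k_{m-1})$ then $s_i$ fixes every block of $P$ setwise, so $\varphi_{s_i}(P)=P$, and both sides evaluate to $(s_i,P)$. For the essential relation (\ref{re5}), set $\sigma=s_{i_1}\cdots s_{i_r}$, $\tau=s_{j_1}\cdots s_{j_t}$, $Q=(l_1,\dots,l_{m'-1})$; the product formula gives $e_P\,\sigma\,e_Q=(\sigma,\varphi_\sigma(P)*Q)$ on the left, while $\mathrm{Ad}(\tau)(e_q)=(1_{S_n},\varphi_\tau(q))$ forces the right-hand side to be $(\sigma,\varphi_{\tau\sigma}(q))$. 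Substituting $q=u^{\tau}(P*\varphi_{\sigma^{-1}}(Q))=\varphi_{\tau^{-1}}(P*\varphi_{\sigma^{-1}}(Q))$ and applying $\varphi_{\tau\sigma}\circ\varphi_{\tau^{-1}}=\varphi_\sigma$ together with the $*$–multiplicativity of $\varphi_\sigma$ collapses $\varphi_{\tau\sigma}(q)$ to $\varphi_\sigma(P)*Q$, so the two sides agree; a $\tau$ with $q\in\Pi_n$ exists because every composition of $n$ is realized by a standard partition after relabeling. Since the generators also visibly generate $\mathscr{R}_n$, each $(w,p)$ factoring as $(w,1_{P_n})(1_{S_n},p)$ with the second factor a conjugate of a standard generator, this yields a surjective homomorphism $\pi$ from the monoid $\mathscr{M}$ presented by (\ref{re1})–(\ref{re5}) onto $\mathscr{R}_n$.

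It remains to prove $\pi$ injective, which reduces to the inequality $|\mathscr{M}|\le|\mathscr{R}_n|$. For this I introduce the normal form $v\,e_Q\,u$, where $Q\in\Pi_n$ and $u\in S_n$ is the distinguished minimal coset representative determined by $p$ (so that $\varphi_u(Q)=p$), while $v\in S_n$ is arbitrary; fixing reduced words for $v$ and $u$ gives exactly $n!\,|P_n|$ such words. The claim is that every word in the generators rewrites into one of them using only (\ref{re1})–(\ref{re5}). I would argue by induction on the number of $e$–letters: between two consecutive $e$–letters only $s$–letters occur, so I first use (\ref{re4}) and the Coxeter relations to push past the left $e$ those $s_i$ that stabilize its blocks, until the side condition of (\ref{re5}) is met or no $s$ remains between them, and then apply (\ref{re5}) once to merge the pair into a single conjugated generator, strictly lowering the $e$–count. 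When one $e$ remains the word has the shape $v_0\,e_Q\,v_1$ with $p=\varphi_{v_1}(Q)$; writing $v_1=hu$ with $h$ in the Young stabilizer of $Q$ and $u$ minimal, a final use of (\ref{re4}) moves $h$ to the left of $e_Q$, leaving $v\,e_Q\,u$. As $\pi$ is onto and every element of $\mathscr{M}$ equals a normal form, we get $n!\,|P_n|\le|\mathscr{M}|\le n!\,|P_n|$, whence $\pi$ is a bijection.

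The main obstacle is precisely this \emph{spanning} step: verifying that the reduction terminates inside the prescribed set of $n!\,|P_n|$ words, in particular that the merge via (\ref{re5}) can always be triggered after finitely many (\ref{re4})–commutations, and that the stabilizer of $Q$ is generated by exactly the transpositions for which (\ref{re4}) is available, so that the coset factorization $v_1=hu$ is realizable by the relations. I stress that genuine confluence of the rewriting system is \emph{not} required: the argument uses only the upper bound $|\mathscr{M}|\le|\mathscr{R}_n|$, so it suffices that the normal forms span and that there are at most $|\mathscr{R}_n|$ of them, the matching lower bound being free from surjectivity of $\pi$. This isolates the whole difficulty in the finitary reduction lemma, the relation checks and the cardinality count being routine.
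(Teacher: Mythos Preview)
The paper does not prove this proposition here; it is quoted verbatim from the author's earlier preprint \cite{M} (Proposition~3.8 there), so there is no in-paper argument against which to compare your attempt.

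That said, your strategy is the standard and correct one for a finite monoid with a known matched-pair description: verify the relations in $S_n\Join_\varphi P_n$ to obtain a surjection $\pi:\mathscr{M}\twoheadrightarrow\mathscr{R}_n$, then bound $|\mathscr{M}|$ above by $n!\,|P_n|$ via normal forms. Your relation checks are accurate; in particular the computation for (\ref{re5}) using $\varphi_{\tau\sigma}\circ\varphi_{\tau^{-1}}=\varphi_\sigma$ and the multiplicativity of $\varphi_\sigma$ with respect to $*$ is exactly right.

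The only point where a reader might want one more sentence is the merge step. Since the side condition in (\ref{re5}) constrains only the \emph{first} $s$-letter to the right of $e_P$, your reduction really does terminate: repeatedly apply (\ref{re4}) to slide a leading $s_i$ with $\{i,i+1\}$ inside one block of $P$ to the left of $e_P$ (each such move shortens the inter-idempotent word), and stop at the first $s_{i_1}$ for which this fails, whereupon (\ref{re5}) fires; the case $r=0$ is vacuous in the side condition and yields $e_Pe_Q=\tau^{-1}e_q\tau$. Each application of (\ref{re5}) replaces two $e$-letters by one, so the induction on the $e$-count is sound. Finally, your observation that the Young stabilizer of a standard $Q\in\Pi_n$ is generated precisely by the simple reflections for which (\ref{re4}) is available is what legitimizes the coset factorization $v_1=hu$ at the end. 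With those two remarks made explicit the argument is complete; your own caveat about needing full confluence is, as you note, unnecessary since the counting inequality suffices.
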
  
\subsection{Braid $\mathbb{P}M$-monoids}
We recall a braid monoid defined in \cite{M}. The notations are the same as those in Proposition \ref{rep}, and we add the following notation. We denote by $b|_I$ an element of  braid group of  $\#I$-strings which has strings in the place of any $i\in I$ for $b\in B_n$ and $I\subset\{1,\dots,n\}$.  If $s_{i_1},\dots,s_{i_r}\in B_n$ satisfy $s_{i_1}\dots s_{i_r}|_I=id|_I$, where $I\subset\{1,\dots,n\}$ and $id$ is the identity braid in $B_n$, then we abbreviate this condition as $\{i_1,\dots,i_r\}|_I=id$.
\begin{definition} \label{Bpm}
The braid $\mathbb{P}M$-monoid is a monoid which is defined by the monoid presentation with generating set 
\begin{equation*}
\{s_1^{\pm 1},\dots,s_{n-1}^{\pm 1},e_{k_1,\dots,k_{m-1}}\,\,(1\le k_1<\dots<k_{m-1}< n)\}
\end{equation*}
and defining relations
\begin{align} 
s_is_i^{-1}&=s_i^{-1}s_i=1 \,\, &(&1\le i\le n-1), \label{re1-} \\
s_is_j&=s_js_i ,\,\ &(&1\le i,j \le n-1,\,|i-j|\ge 2), \label{re2-} \\
s_is_{i+1}s_i&=s_{i+1}s_is_{i+1} \,\, &(&1\le i\le n-1), \label{re3-}
\end{align}
\begin{align}
&\,\,\,\,\,\,\,\,\,\,\,\,\,\,\,\,\,\,\,\,\,\,\, s_{i_1}^{\pm 1}\dots s_{i_r}^{\pm 1}e_{k_1,\dots,k_{m-1}}s_{j_1}^{\pm 1}\dots s_{j_t}^{\pm 1}=e_{k_1,\dots,k_{m-1}}   \label{re4-} \\
&\,\,\,\,\,\,\,\,\,\,\,\,\,\,\,\,\,\,\,\,\,\,\,\,\,\,\,\,\,\,\,\,\,\,\,\,\,\,\,\,\,\,\,\,\,\,\,\,\,\,\,\,\begin{pmatrix}
\{i_1,\dots,i_r,j_1,\dots,j_t\}|_{\{k_{j-1}-1,\dots,k_j\}}=id \\ ^{\forall}j=1,\dots ,m 
\end{pmatrix}, \notag \\
&\,\,\,\,\,\,\,\,\,\,\,\,\,\,\,\,\,\,\,\,\,\,\,\,\,\,\, e_{k_1,\dots,k_{m-1}}s_{i_1}^{\pm 1}\dots s_{i_r}^{\pm 1}e_{l_1,\dots,l_{m^{\prime}-1}} 
={\rm{Ad}}(s_{j_1}^{\pm 1}\dots s_{j_t}^{\pm 1})(e_{q})s_{i_1}^{\pm 1}\dots s_{i_r}^{\pm 1} \label{re5-} \\
&\,\,\,\,\,\,\,\,\,\,\,\,\,\,\,\,\,\,\,\,\,\,\,\,\,\,\,\,\,\,\,\,\,\,\,\,\,\,\,\,\,\,\,\,\,\,\,\,\,\,\,\,\begin{pmatrix} 
1 \le k_1<\dots<k_{m-1}< n \\
1\le l_1<\dots<l_{m^{\prime}-1}< n  \\
\{i_1,i_1+1\}\nsubseteq \{k_{l-1}+1,\dots,k_l\}, ^{\forall} l=1,\dots,m \\
q=u^{s_{j_1}^{\pm 1}\dots s_{j_t}^{\pm 1}}((k_1,\dots,k_{m-1})*\varphi_{(s_{i_1}\dots s_{i_r})^{-1}}((l_1,\dots,l_{m^{\prime}-1})))
\end{pmatrix}. \notag
\end{align}
\end{definition}
The braid $\mathbb{P}M$-monoid is described by a geometric braid. We denote by $\mathscr{M}$ the monoid defined in Definition \ref{Bpm}. To describe the monoid $\mathscr{M}$ geometrically we shall define a $\mathbb{P}M$-braid.  

First, we shall define an arc.
\begin{definition}
An arc is the image of an embedding from the unit interval $[0,1]$ into $\mathbb{R}^3$. 
\end{definition}

Take the usual coordinate system $(x,y,z)$ for $\mathbb{R}^3$. Choose $z_0^{(m)}<z_1^{(m)}<\dots<z_0^{(2)}<z_1^{(2)}<z_0^{(1)}<z_1^{(1)}$. Mark $n\geq 0$ distinct points $P_1^i,\dots,P_n^i$ on a line in the plane $z=z_1^{(i)}$, and project this orthogonally on the plane $z=z_0^{(i)}$,  yielding points $Q_1^i,\dots,Q_n^i$ for each $i=1,\dots,m$. 

A $\mathbb{P}M$-braid on $n$ strings is a system
\begin{equation*}
\beta=\{\beta_1,\dots,\beta_{k_1},\beta_{k_1+1},\dots,\beta_{k_2},\beta_{k_2+1},\dots,\beta_{k_{m-1}+1},\dots,\beta_n\}
\end{equation*}
of $n$ arcs for some $1\le k_1<k_2<\dots<k_{m-1}<n$ such that 
\begin{enumerate}
\item[(1)] There exists a partial one-one mapping of rank $k_1$ 
\begin{equation*}
\Phi_1^{\beta}:\{1,\dots,n\}\rightarrow \{1,\dots,n\}
\end{equation*}
with domain $\{i_1,\dots,i_{k_1}\}$ such that $\beta_j$ connects $P_{i_j}^1$ to $Q_{\Phi_1^{\beta}(i_j)}^1$ for $j=1,\dots,k_1$.\\
There exists a partial one-one mapping of rank $k_2-k_1$ 
\begin{equation*}
\Phi_2^{\beta}:\{1,\dots,n\}\backslash\{i_1,\dots,i_{k_1}\}\rightarrow \{1,\dots,n\}\backslash\{i_1,\dots,i_{k_1}\}
\end{equation*}
with domain $\{i_{k_1+1},\dots,i_{k_2}\}$ such that $\beta_j$ connects $P_{i_j}^2$ to $Q_{\Phi_1^{\beta}(i_j)}^2$ for $j=k_1+1,\dots,k_2$.
\begin{center}
$\dots\dots$
\end{center}
There exists a partial one-one mapping of rank $n-k_{m-1}$
\begin{equation*}
\Phi_m^{\beta}:\{1,\dots,n\}\backslash\{i_1,\dots,i_{k_{m-1}}\}\rightarrow \{1,\dots,n\}\backslash\{i_1,\dots,i_{k_{m-1}}\}
\end{equation*}
with domain $\{i_{k_{m-1}+1},\dots,i_n\}$ such that $\beta_j$ connects $P_{i_j}^m$ to $Q_{\Phi_m^{\beta}(i_j)}^m$ for $j=k_{m-1}+1,\dots,n$. \\
\item[(2)] For $j=1,\dots,m$, the arc $\beta_l$ intersects the plane $z=z_0^{(j)}$ exactly once, and $\beta_l$ intersects the plane $z=z_1^{(j)}$ exactly once, for $l=k_{j-1}+1,\dots,k_j$, and $\beta_s$ does not intersect $z=z_0^{(t)}$, $z=z_1^{(t)}$ for $s\neq t$. \\
\item[(3)] For $j=1,\dots,m$ the union $\beta_{k_{j-1}+1}\cup\dots\cup\beta_{k_j}$ of the arcs intersects each of parallel planes $z=z_0^{(j)},z=z_1^{(j)}$ at exactly $k_j-k_{j-1}$ distinct points. 
\end{enumerate}
\begin{example}
The following is a $\PM$-braid. \\
\raisebox{50pt}{$\beta=$}
\includegraphics[width=4cm,bb=0 0 320 300]{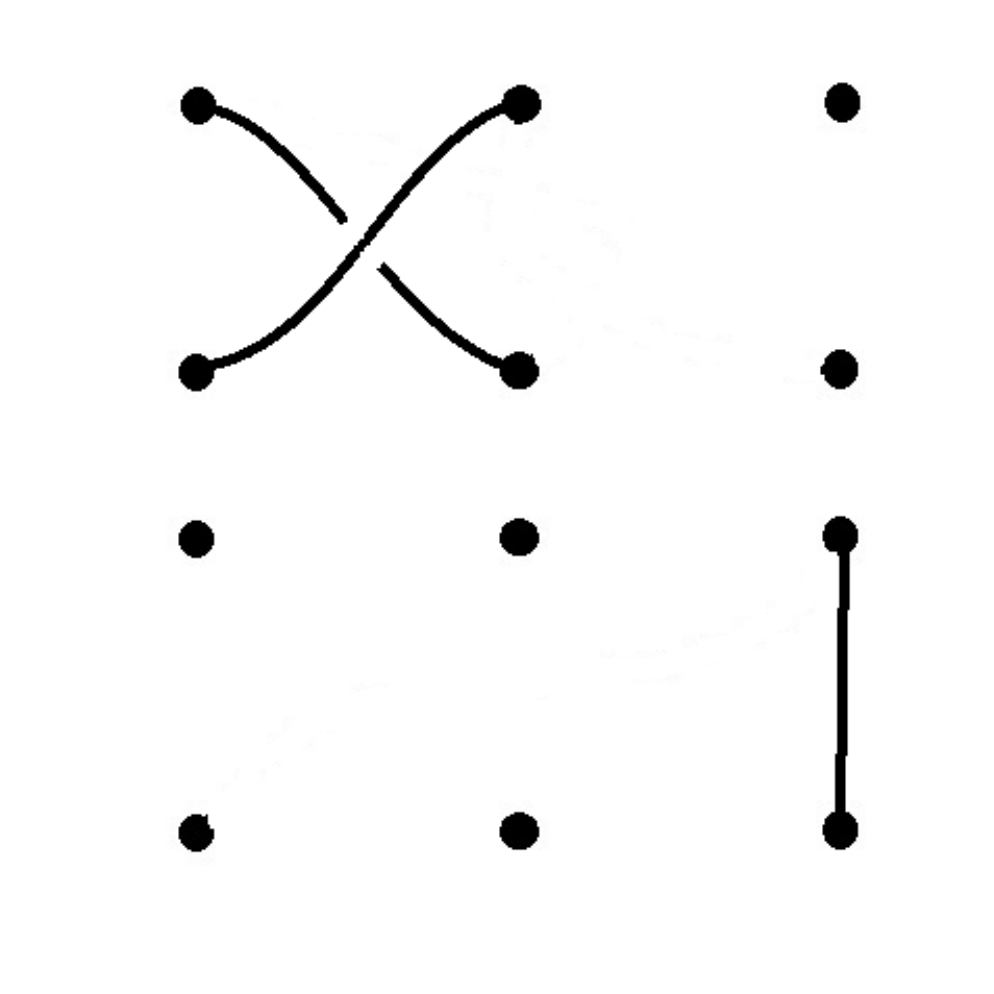} 
\end{example}

Two $\mathbb{P}M$-braids
\begin{equation*}
\begin{split}
 &\beta=\{\beta_1,\dots,\beta_{k_1},\beta_{k_1+1},\dots,\beta_{k_2},\beta_{k_2+1},\dots,\beta_{k_{m-1}+1},\dots,\beta_n\}, \\
 &\gamma=\{\gamma_1,\dots,\gamma_{k_1},\gamma_{k_1+1},\dots,\gamma_{k_2},\gamma_{k_2+1},\dots,\gamma_{k_{m^{\prime}-1}+1},\dots,\gamma_n\}
 \end{split}
 \end{equation*}
are defined to be equivalent if \\
\begin{itemize}
\item[(1)] $m=m^{\prime}$ and $\Phi_i^{\beta}=\Phi_i^{\gamma}$ for $i=1,\dots,m$, \\
\item[(2)] $\beta$ and $\gamma$ are homotopy equivalent, i.e., there exist continuous maps 
\begin{equation*}
F_j:[0,1]\times [0,1]\rightarrow \mathbb{R}^3,\,\,\,\,\,\,\,\,\,\,\, (j=1,\dots,m)
\end{equation*}
such that for all $s, t\in [0,1]$,
\begin{align*}
&\begin{matrix}
F_j(t,0)&=&\beta_j(t)  \\
F_j(t,1)&=&\gamma_j(t)
\end{matrix}
&(&j=1,\dots ,m), \\
&\begin{matrix}
F_j(0,s)&=&P_{i_j}^1  \\
F_j(1,s)&=&Q_{\Phi_1^{\beta}(i_j)}^1
\end{matrix}
&(&j=1,\dots ,k_1), \\
&\begin{matrix}
F_j(0,s)&=&P_{i_j}^2 \\
F_j(1,s)&=&Q_{\Phi_2^{\beta}(i_j)}^2
\end{matrix}
&(&j=k_1+1,\dots,k_2),  \\
&&\dots \\
&\begin{matrix}
F_j(0,s)&=&P_{i_j}^m  \\
F_j(1,s)&=&P_{\Phi_m^{\beta}(i_j)}^m
\end{matrix}
&(&j=k_{m-1}+1,\dots,n), 
\end{align*}
and, for each $s\in[0,1]$ if we define
\begin{equation*}
\beta^s=\{\beta_1^s,\dots,\beta_m^s\},
\end{equation*}
where
\begin{equation*}
\beta_j^s(t)=F_j(s,t) {\text{ for }}j=1,\dots,n,
\end{equation*}
then $\beta^s$ is itself a $\mathbb{P}M$-braid. 
\end{itemize}
\begin{example}
The following $\PM$-braids are equivalent. \\
\raisebox{40pt}{$\beta=$}
\includegraphics[width=4cm,bb=0 0 320 360]{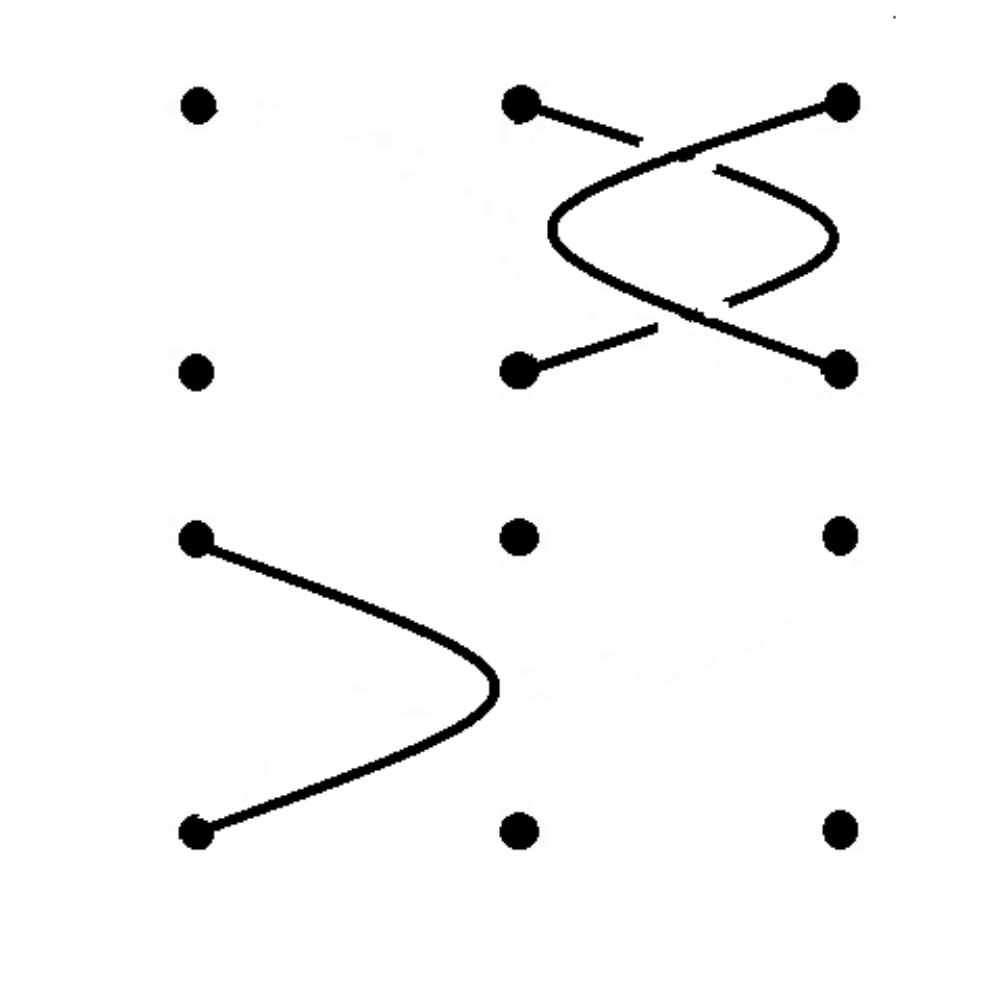} 
\raisebox{40pt}{$\gamma=$}
\includegraphics[width=4cm,bb=0 0 320 360]{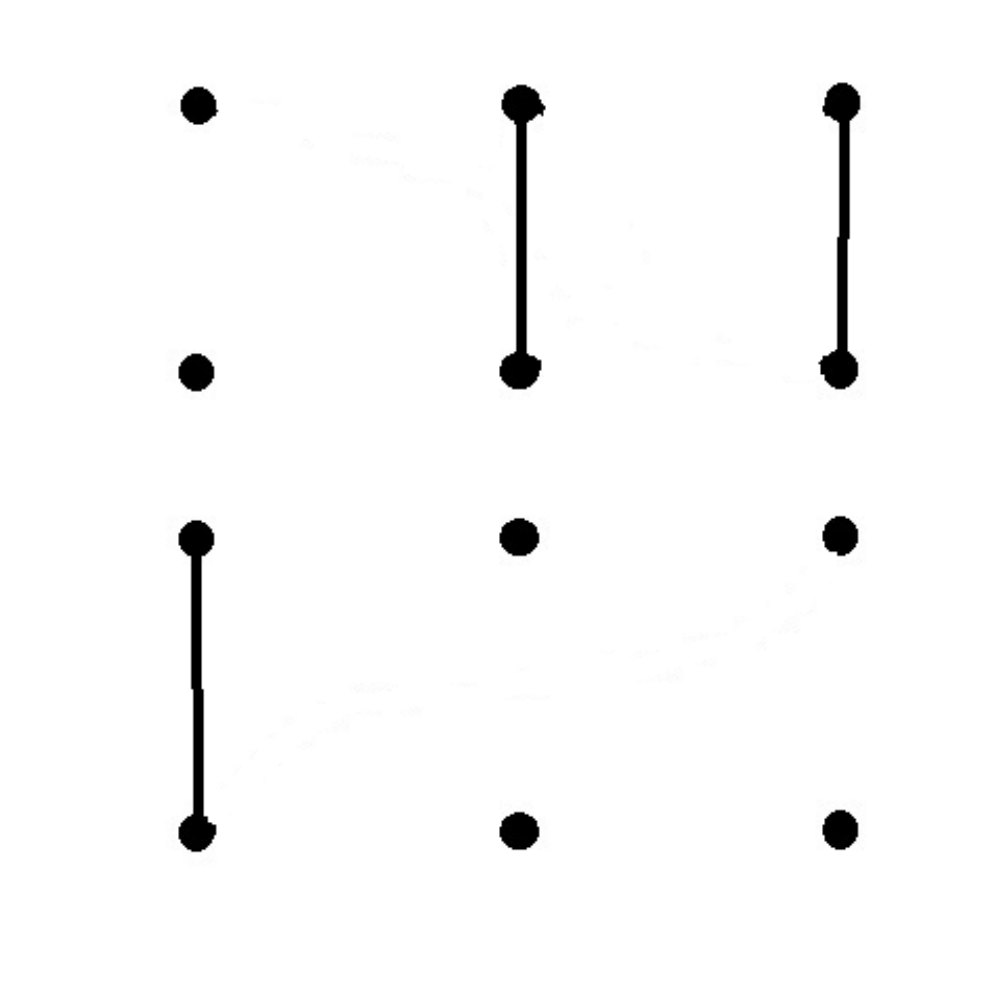} \\
\end{example}

Define the product $\beta\gamma$ of two braids
\begin{equation*}
\begin{split}
 &\beta=\{\beta_1,\dots,\beta_{k_1},\beta_{k_1+1},\dots,\beta_{k_2},\beta_{k_2+1},\dots,\beta_{k_{m-1}+1},\dots,\beta_n\}, \\
 &\gamma=\{\gamma_1,\dots,\gamma_{k_1},\gamma_{k_1+1},\dots,\gamma_{k_2},\gamma_{k_2+1},\dots,\gamma_{k_{m^{\prime}-1}+1},\dots,\gamma_n\}
 \end{split}
\end{equation*}
as follows. 

We first define an operation ($k_il_j$). Take $z_1^{(11)}>z_0^{(11)}>z_1^{(21)}>z_0^{(21)}>\dots>z_1^{(m1)}>z_0^{(m1)}>z_1^{(12)}>z_0^{(12)}>\dots>z_1^{(m2)}>z_0^{(m2)}>
\dots>z_1^{(mm^{\prime})}>z_0^{(mm^{\prime})}$.\\

($k_il_j$) : 
\begin{itemize}
\item[(1)] Translate $\{\gamma_{l_{j-1}+1},\dots,\gamma_{l_j}\}$ parallel to itself so that the upper plane of  $\{\gamma_{l_{j-1}+1},\dots,\gamma_{l_j}\}$ coincides with the lower plane of $\{\beta_{k_{i-1}+1},\dots,\beta_{k_j}\}$; 
\item[(2)] Translate the above system of arcs so that the upper plane of  $\{\beta_{k_{i-1}+1},\dots,\beta_{k_j}\}$ coincides with $z=z_1^{(ij)}$. Keeping   $z=z_1^{(ij)}$ fixed, contract the resulting systems of arcs so that the translated lower plane of $\{\gamma_{l_{j-1}+1},\dots,\gamma_{l_j}\}$ lies into the position of $z=z_0^{(ij)}$; 
\item[(3)] Remove any arc that do not now join the upper plane to the lower plane. 
\end{itemize}

Then take the operations ($k_1l_1$),$\dots$,($k_ml_1$),($k_1l_2$),$\dots$,($k_ml_2$),($k_1l_{m^{\prime}}$),$\dots$,($k_ml_{m^{\prime}}$), finally remove empty systems of arcs. The resulting $\mathbb{P}M$-braid is denoted by $\beta\gamma$.

\begin{example}
Let $\beta$ and $\gamma$ be the following $\mathbb{P}M$-braids: \\
\raisebox{40pt}{$\beta=\begin{matrix}\beta_1 \\ \beta_2 \end{matrix}=$}
\includegraphics[width=4cm,bb=0 0 320 250]{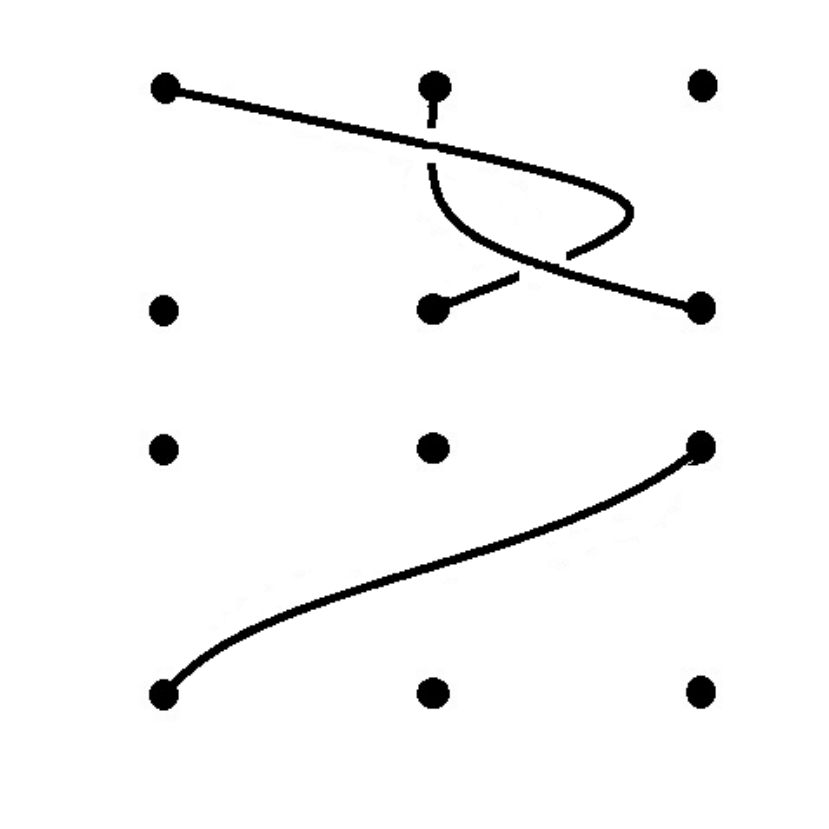} 
\raisebox{40pt}{$\gamma=\begin{matrix}\gamma_1 \\ \gamma_2 \end{matrix}=$}
\includegraphics[width=4cm,bb=0 0 320 250]{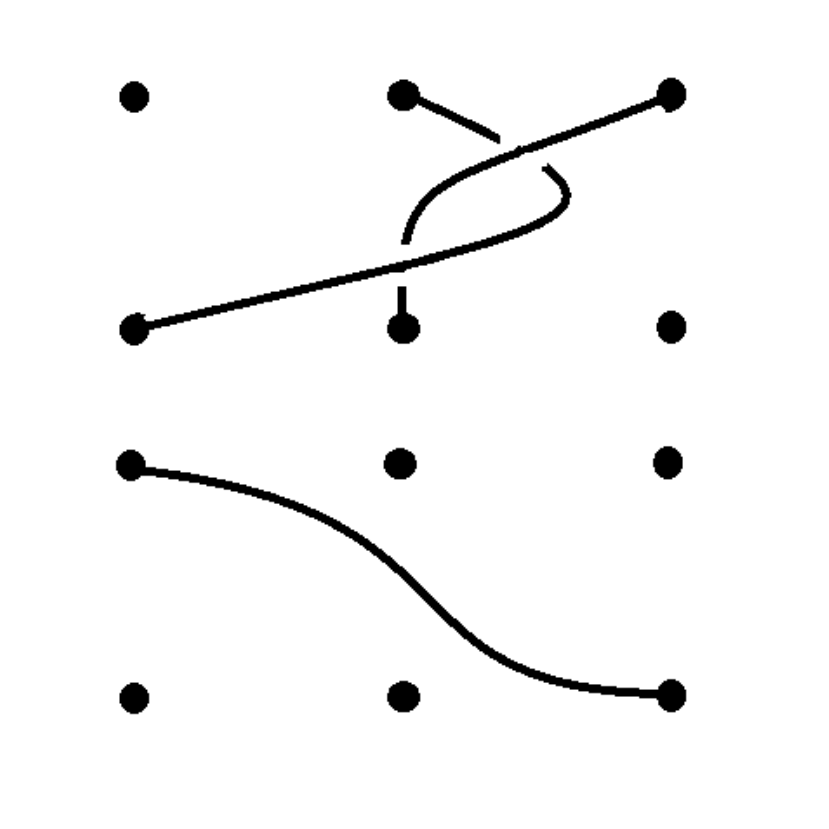} \\
then $\beta^2,\beta\gamma,\gamma\beta$ are obtained as follows: \\
\raisebox{140pt}{$\beta^2=\begin{matrix} \beta_1\beta_1 \\ \beta_2\beta_1 \\ \beta_1\beta_2 \\ \beta_2\beta_2 \end{matrix}=$}
\includegraphics[width=4cm,bb=0 0 320 450]{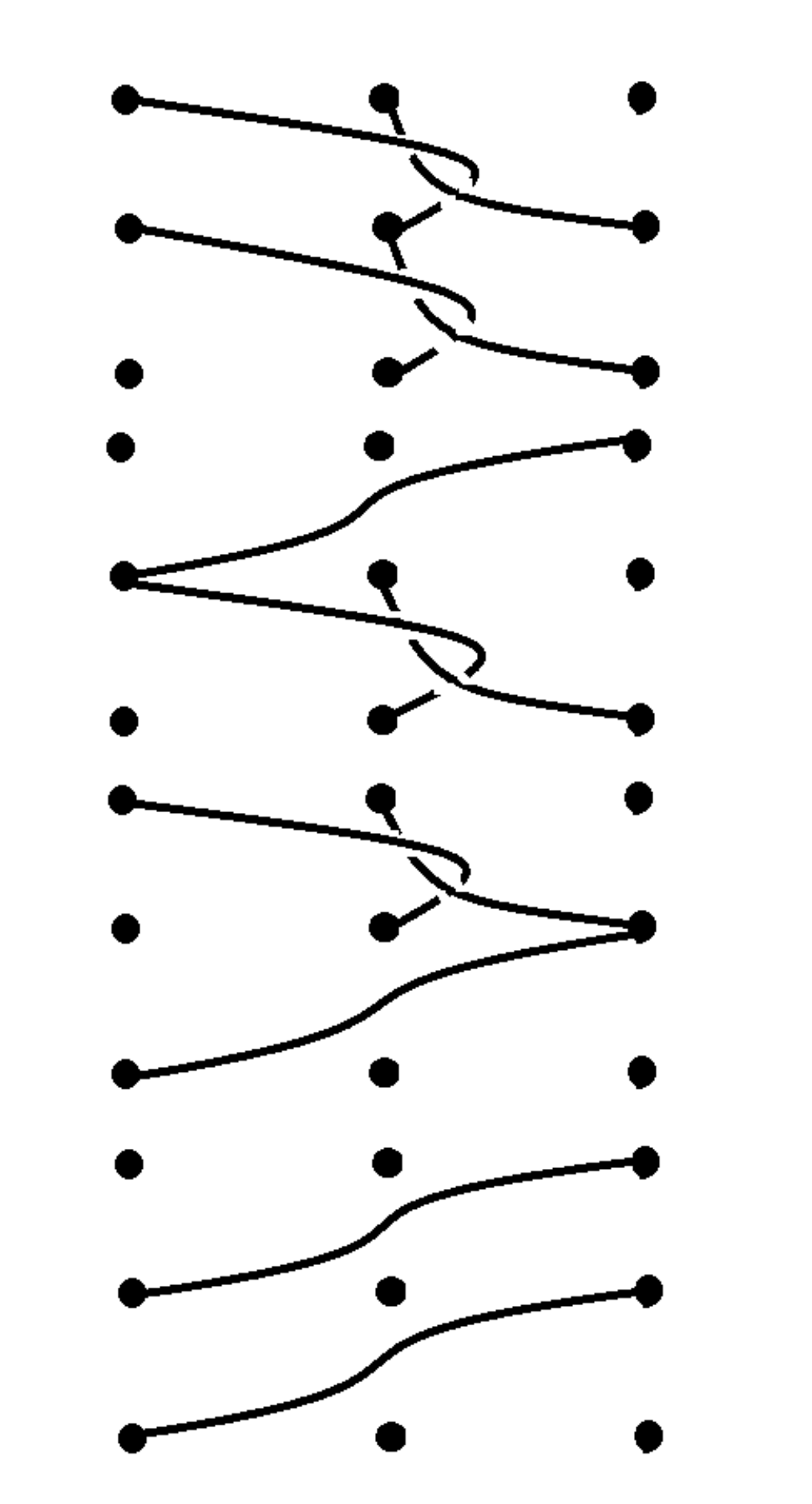}
\raisebox{140pt}{$=$}
\raisebox{70pt}{\includegraphics[width=4cm,bb=0 0 320 430]{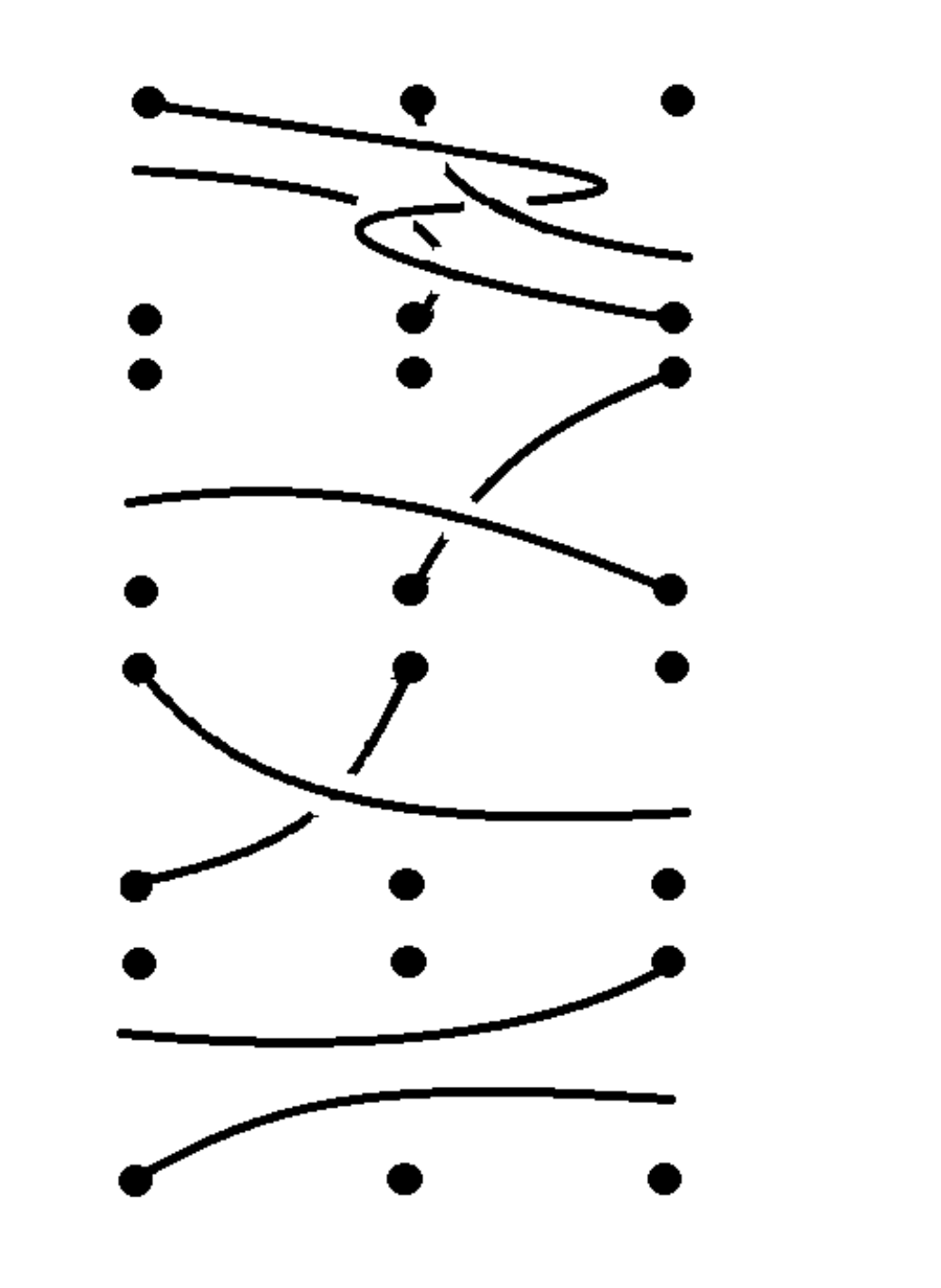}}
\raisebox{140pt}{$=$}
\raisebox{70pt}{\includegraphics[width=4cm,bb=0 0 320 350]{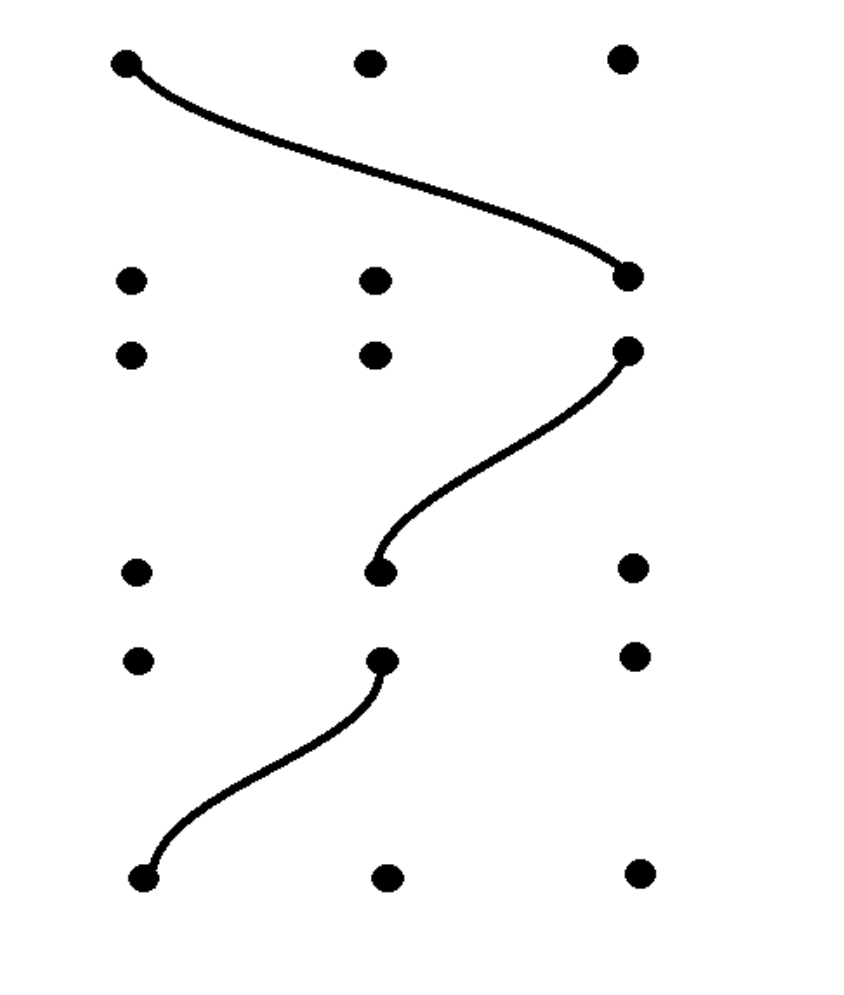}} \\
\raisebox{80pt}{$\beta\gamma=$}
\raisebox{50pt}{\includegraphics[width=4cm,bb=0 0 320 200]{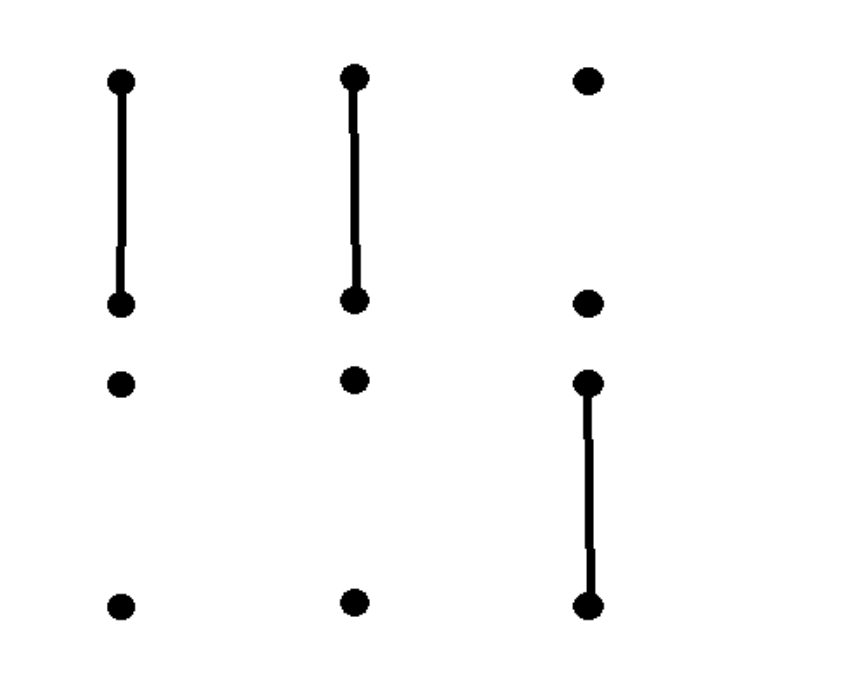}} 
\raisebox{80pt}{$\gamma\beta=$}
\raisebox{50pt}{\includegraphics[width=4cm,bb=0 0 320 200]{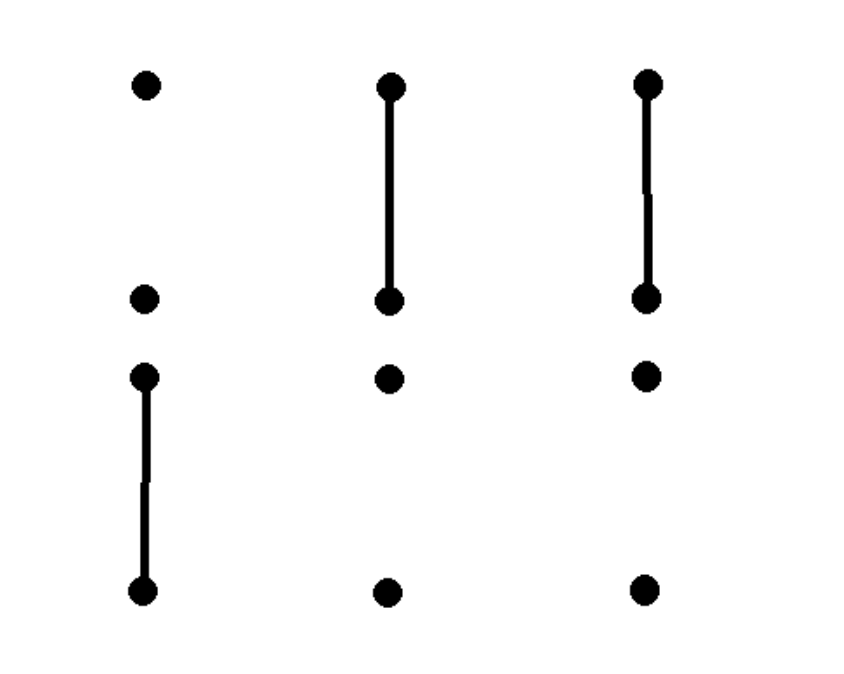}} 
\end{example}
We denote by $[\beta]$ the homotopy equivalence class of $\beta$. Put
\begin{equation*}
\mathscr{RB}_n=\{[\beta]:\beta {\text{ is a }} \mathbb{P}M{\text{-braid}}\}.
\end{equation*}
\begin{theorem}[\cite{M} Theorem 4.5] \label{pmb}
The braid $\mathbb{P}M$-monoid $\mathscr{M}$ is isomorphic to the monoid $\mathscr{RB}_n$.
\end{theorem}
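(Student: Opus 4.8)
The plan is to imitate the classical proof of Artin's theorem identifying the braid group with geometric braids, adapting it to the layered structure of $\PM$-braids. First I would define a monoid homomorphism $\Psi\colon\mathscr{M}\to\mathscr{RB}_n$ by specifying the image of each generator: send $s_i^{\pm1}$ to the homotopy class of the one-block $\PM$-braid consisting of a single positive (resp.\ negative) crossing of the $i$-th and $(i+1)$-th strands and trivial elsewhere, and send $e_{k_1,\dots,k_{m-1}}$ to the homotopy class of the $\PM$-braid whose strands carry the identity permutation but which is split into $m$ nested layers at the levels $z_0^{(j)},z_1^{(j)}$ according to the ordered set partition $(\{1,\dots,k_1\},\dots,\{k_{m-1}+1,\dots,n\})$. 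Because $\mathscr{M}$ is given by generators and relations, $\Psi$ descends from the free monoid as soon as these images satisfy the defining relations (\ref{re1-})--(\ref{re5-}) in $\mathscr{RB}_n$.

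Checking the relations splits into two parts. The braid relations (\ref{re1-}), (\ref{re2-}), (\ref{re3-}) involve only one-block braids, so their verification is exactly Artin's theorem for the ordinary geometric braid group and requires nothing new. The mixed relations (\ref{re4-}) and (\ref{re5-}) must be verified directly from the geometric product operation $(k_il_j)$: relation (\ref{re4-}) reflects that a braiding restricting to the identity on each layer can be contracted away when composed with the layer-splitting braid $e_{k_1,\dots,k_{m-1}}$, while (\ref{re5-}) records how crossings commute past a layer boundary and how two splittings merge, which is precisely encoded by the intersection of ordered set partitions together with the conjugation $\mathrm{Ad}$ and the index $u^{(-)}$. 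These can be read off from diagrams analogous to the worked examples. Surjectivity of $\Psi$ I would then obtain by a generic-projection argument: given any $\PM$-braid $\beta$, project it so that the layers are stacked in order of their $z$-levels and all crossings are transverse double points, and read the diagram from top to bottom to produce a word in the generators whose image under $\Psi$ is $[\beta]$.

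The main obstacle is injectivity. Here I would exploit a matched-pair decomposition of $\mathscr{RB}_n$ parallel to Proposition \ref{rm}: every $\PM$-braid determines, on one hand, an ordered set partition of $[n]$ recording its layer structure together with the partial bijections $\Phi_i^\beta$, and on the other hand an ordinary braid in $B_n$ obtained by forgetting the layering. The homotopy relation likewise decomposes, so equivalence classes are parametrised by a pair (braid in $B_n$, partition in $P_n$) subject to compatibility, giving $\mathscr{RB}_n\cong B_n\Join P_n$. On the algebraic side, relations (\ref{re4-}) and (\ref{re5-}) let any word of $\mathscr{M}$ be brought into the normal form $w\,e_{k_1,\dots,k_{m-1}}$ with $w\in B_n$, exhibiting the same matched-pair structure, and comparing the two factor by factor --- Artin's theorem for the $B_n$-factor and the explicit description of $P_n$ for the partition factor --- shows that $\Psi$ identifies normal forms bijectively. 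The delicate point, deserving the most care, is to verify that every elementary homotopy move between $\PM$-braids that mixes the two factors (sliding a crossing across a layer boundary, or splitting and merging layers) is a consequence of (\ref{re4-}) and (\ref{re5-}); once this is established, $\Psi$ is an isomorphism.
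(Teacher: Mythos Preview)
This theorem is not proved in the present paper: it is quoted from the author's earlier preprint \cite{M} (as Theorem~4.5 there), and no argument is reproduced here. There is therefore nothing in this paper against which to compare your proposal.

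As for the proposal itself, the overall strategy --- define $\Psi$ on generators, verify the relations geometrically, obtain surjectivity by reading words off a generic projection, and prove injectivity via a normal form coming from a matched-pair decomposition $B_n\Join P_n$ parallel to Proposition~\ref{rm} --- is the natural one and is very likely close to what \cite{M} does. One point to watch: the normal form you assert, $w\,e_{k_1,\dots,k_{m-1}}$ with $w\in B_n$ and a \emph{standard} idempotent, cannot be correct as stated, because the $P_n$-component of an element of $B_n\Join P_n$ is an arbitrary ordered set partition, not necessarily one lying in $\Pi_n$. The correct normal form is $w\cdot p$ with $p\in P_n$ arbitrary, or, in terms of the listed generators, $w\cdot\mathrm{Ad}(v)(e_{k_1,\dots,k_{m-1}})$ for some $v\in B_n$; relation~(\ref{re5-}) is exactly what lets you reach such a form, but it does not in general let you absorb the conjugating $v$ into $w$. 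This does not damage the architecture of your argument, but the injectivity comparison should be phrased accordingly.
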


\section{Mapping class groups and inverse mapping class monoids}
We review the concepts of mapping class groups and inverse mapping class monoids. Let $S=S_{g,b,n}$ be a connected orientable surface of genus $g$ with $b$ boundary components and the set of $n$ marked points $p=\{1,2,\dots,n\}$.
\subsection{Mapping class groups}
Let ${\rm{Homeo}}^+(S,\partial S)$ denote the group of orientation-preserving homeomorphisms of $S$ that restrict to the identity on $\partial S$ and fix $p$ as a set. Then define mapping class group  of $S$ denoted by $\mathcal{M}(S)$ as
\begin{equation*}
\mathcal{M}(S)={\rm{Homeo}}^+(S,\partial S)/{\text{isotopy}}.
\end{equation*}
We describe some examples of mapping class groups.
\begin{lemma}[Alexander lemma \cite{FM} Lemma 2.1.]
Let $D^2$ be a disk. Then $\mathcal{M}(D^2)$ is trivial.
\end{lemma}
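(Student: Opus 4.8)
The plan is to prove this via the classical \emph{Alexander trick}, which constructs an explicit isotopy from any boundary-fixing homeomorphism of the disk to the identity by coning toward the center. Identify $D^2$ with the closed unit disk $\{x\in\mathbb{R}^2:|x|\le 1\}$, and let $\phi\in{\rm Homeo}^+(D^2,\partial D^2)$, so that $\phi|_{\partial D^2}=\mathrm{id}$. The goal is to exhibit an isotopy, fixing $\partial D^2$ pointwise throughout, between $\phi$ and the identity map; since $\phi$ is arbitrary, this shows every mapping class is trivial.

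First I would define the map $F:D^2\times[0,1]\to D^2$ by
\[
F(x,t)=\begin{cases} t\,\phi(x/t) & 0\le |x|\le t, \\ x & t\le |x|\le 1, \end{cases}
\]
for $t\in(0,1]$, together with $F(x,0)=x$. At $t=1$ this recovers $\phi$ on all of $D^2$ (the two cases agree on $|x|=1$ because $\phi$ fixes the boundary), while at $t=0$ it is the identity. For each fixed $t$, the map $F(\cdot,t)$ is a homeomorphism of $D^2$ that restricts to the identity on the annulus $t\le|x|\le 1$, and in particular on $\partial D^2$. Thus $F$ is a candidate isotopy rel $\partial D^2$ from the identity to $\phi$.

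The main point to verify --- and the place where the argument is most delicate --- is the continuity of $F$, specifically at the origin $x=0$ and along the slice $t=0$. Away from these, continuity is clear because $F$ is built by rescaling $\phi$ and by the identity, glued along the sphere $|x|=t$ where the two formulas agree. At the origin one uses that $t\,\phi(x/t)$ has norm at most $t$ and hence tends to $0$ as $x\to 0$; along $t\to 0^{+}$ one observes that for each fixed $x\neq 0$ eventually $t<|x|$, forcing $F(x,t)=x$, while the locus $x=0$ is controlled by the same norm bound. I would package these estimates to conclude that $F$ is jointly continuous, and then note that the continuous inverse is obtained by the symmetric formula using $\phi^{-1}$, so that each level map is genuinely a homeomorphism.

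Concluding, $F$ is an isotopy from $\mathrm{id}_{D^2}$ to $\phi$ fixing $\partial D^2$ pointwise, whence $[\phi]=[\mathrm{id}]$ in $\mathcal{M}(D^2)$. As $\phi$ ranges over all of ${\rm Homeo}^+(D^2,\partial D^2)$, the mapping class group $\mathcal{M}(D^2)$ is trivial. The only subtlety beyond continuity is orientation, but the coning construction visibly preserves it, interpolating between two orientation-preserving maps through homeomorphisms, so no separate orientation check is needed.
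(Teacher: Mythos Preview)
Your argument is the classical Alexander trick and is correct. The paper itself does not supply a proof of this lemma---it merely cites \cite{FM}, Lemma~2.1---and the proof there is precisely the coning construction you have written down, so your proposal matches the intended reference.
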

\begin{proposition}[\cite{FM} Proposition 2.3.]
Let $S^2$ be a sphere. Then
\begin{equation*}
\mathcal{M}(S^2\backslash\{1,2,3\})\simeq S_3.
\end{equation*}
\end{proposition}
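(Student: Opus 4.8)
The plan is to realize the isomorphism as the permutation action on the three marked points and then verify bijectivity separately. First I would define the homomorphism \[ \rho:\mathcal{M}(S^2\backslash\{1,2,3\})\rightarrow S_3 \] sending the class of a homeomorphism $f$ to the permutation it induces on the set $\{1,2,3\}$; this is well defined because isotopies preserve the marked set and an orientation-preserving homeomorphism permutes the three marked points. Surjectivity is the easy direction. Realizing $S^2$ as the round sphere and placing $1,2,3$ as the vertices of an equilateral triangle inscribed in a great circle, the rotation by $2\pi/3$ about the polar axis induces a $3$-cycle, while the rotation by $\pi$ about the axis through one vertex and the antipodal equatorial point fixes that vertex and transposes the other two. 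Both are orientation-preserving, and a $3$-cycle together with a transposition generate $S_3$, so $\rho$ is onto; indeed this exhibits a splitting $S_3\hookrightarrow\mathcal{M}(S^2\backslash\{1,2,3\})$.

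The substance is injectivity, i.e.\ that $\ker\rho$---the classes fixing $1,2,3$ individually---is trivial, and here I would invoke the Alexander lemma. Choose a simple arc $\gamma_1$ joining $1$ to $2$ and a simple arc $\gamma_2$ joining $2$ to $3$, disjoint apart from the common endpoint $2$, so that $\gamma_1\cup\gamma_2$ is an embedded tree whose complement $S^2\backslash(\gamma_1\cup\gamma_2)$ is a single open disk. Given $f\in\ker\rho$, the strategy is to isotope $f$ rel the marked points so that it fixes $\gamma_1$ and then $\gamma_2$ setwise, hence after a further isotopy pointwise. Once $f$ fixes the tree $\gamma_1\cup\gamma_2$ pointwise it restricts to a self-homeomorphism of the complementary disk that is the identity on the boundary, and the Alexander lemma forces this restriction to be isotopic to the identity rel boundary. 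Reassembling these isotopies shows $f$ is isotopic to the identity, so $\ker\rho$ is trivial and $\rho$ is an isomorphism.

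The step I expect to be the main obstacle is justifying that $f$ can be isotoped to fix $\gamma_1$ (and then $\gamma_2$): a priori $f(\gamma_1)$ is merely some simple arc from $1$ to $2$, possibly winding around the point $3$, so one must know that it lies in the same isotopy class as $\gamma_1$. The key observation that resolves this is that on a sphere with only three marked points the simple arc joining two of them is unique up to isotopy: any two such arcs differ by twisting along a simple closed curve separating $3$ from $\{1,2\}$, and such a curve bounds a once-punctured disk, so the corresponding twist is isotopically trivial. With uniqueness in hand, $f(\gamma_1)$ is isotopic to $\gamma_1$ and, by the isotopy extension theorem, this isotopy is realized by an ambient isotopy that we may absorb into $f$; the same argument applied inside the disk obtained by cutting along $\gamma_1$ handles $\gamma_2$, after which the Alexander lemma finishes the proof.
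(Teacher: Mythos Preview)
The paper does not supply its own proof of this proposition; it is simply quoted from \cite{FM} as background. Your argument is correct and is essentially the proof given there: define the permutation homomorphism, realize $S_3$ by rigid rotations for surjectivity, and kill the kernel by straightening an arc and invoking the Alexander lemma on the complementary disk.

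Two minor remarks on streamlining. First, the uniqueness of the arc from $1$ to $2$ is immediate once you observe that $S^2\setminus\{3\}\cong\mathbb{R}^2$ is simply connected, so any two simple arcs between $1$ and $2$ are isotopic rel endpoints; your Dehn-twist explanation is correct but this is the quicker route. Second, the second arc $\gamma_2$ is not really needed: once $f$ fixes $\gamma_1$ pointwise, cutting along $\gamma_1$ already yields a disk with the single marked point $3$ and $f|_{\partial}=\mathrm{id}$, and the Alexander trick (coned from the marked point) applies directly to the once-marked disk. Your two-arc version works too, it just does a bit more than necessary.
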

\begin{proposition}[\cite{FM} Section 9.1.3.]
Let $B_n$ be braid group of $n$ strings. Then 
\begin{equation*}
\mathcal{M}(D^2\backslash\{1,2,\dots,n\})\simeq B_n.
\end{equation*}
\end{proposition}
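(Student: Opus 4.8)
The plan is to realize both sides as homotopy invariants of a single fibration and to read off the isomorphism from its long exact homotopy sequence. On the mapping-class side, I would first record that $\mathcal{M}(D^2\backslash\{1,\dots,n\})=\pi_0\,\mathrm{Homeo}^+(D^2,\partial D^2;P)$, where $P=\{1,\dots,n\}$ is the set of marked points and $\mathrm{Homeo}^+(D^2,\partial D^2;P)$ denotes the orientation-preserving homeomorphisms of the disk that restrict to the identity on $\partial D^2$ and fix $P$ setwise; two such homeomorphisms are isotopic precisely when they lie in the same path component, so the mapping class group is exactly $\pi_0$ of this topological group. On the braid side, I would use the classical Fox--Neuwirth/Artin description $B_n\cong\pi_1(\mathrm{UConf}_n(\mathrm{int}\,D^2))$, where $\mathrm{UConf}_n(\mathrm{int}\,D^2)$ is the space of unordered $n$-point subsets of the interior of the disk.

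Next I would construct the evaluation fibration. The group $G=\mathrm{Homeo}^+(D^2,\partial D^2)$ acts on $\mathrm{UConf}_n(\mathrm{int}\,D^2)$ by $\phi\cdot X=\phi(X)$; this action is transitive, and the stabilizer of the base configuration $P$ is exactly $\mathrm{Homeo}^+(D^2,\partial D^2;P)$. The evaluation map $\mathrm{ev}\colon G\to\mathrm{UConf}_n(\mathrm{int}\,D^2)$, $\phi\mapsto\phi(P)$, is then a fibre bundle with fibre $\mathrm{Homeo}^+(D^2,\partial D^2;P)$, local triviality following from the isotopy extension theorem, which lifts a continuously varying family of configurations to a continuously varying family of ambient homeomorphisms fixing the boundary. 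Simultaneously I would upgrade the Alexander lemma to its sharp form (the Alexander trick): the radial straight-line contraction toward $\partial D^2$ shows that $G=\mathrm{Homeo}^+(D^2,\partial D^2)$ is contractible, so $\pi_0(G)=\pi_1(G)=0$.

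With these inputs the long exact sequence of the fibration
\[
\pi_1(G)\longrightarrow\pi_1\bigl(\mathrm{UConf}_n(\mathrm{int}\,D^2)\bigr)\xrightarrow{\ \partial\ }\pi_0\,\mathrm{Homeo}^+(D^2,\partial D^2;P)\longrightarrow\pi_0(G)
\]
has $\pi_1(G)=\pi_0(G)=0$, so the connecting homomorphism $\partial$ is an isomorphism (it is a genuine group homomorphism because the fibration arises from a group action). Unwinding the two ends identifies $\partial$ with an isomorphism $B_n\xrightarrow{\sim}\mathcal{M}(D^2\backslash\{1,\dots,n\})$. To make the map explicit and confirm it is the expected one, I would trace a standard generator of $\pi_1(\mathrm{UConf}_n)$ --- the loop in which the $i$-th and $(i+1)$-st points exchange positions inside a small embedded disk --- through $\partial$ and verify that it is sent to the half-twist supported near those two marked points, i.e. the Artin generator $\sigma_i$; this also shows the braid relations are respected.

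The main obstacle is the fibration step: verifying that $\mathrm{ev}$ is genuinely a fibre bundle (equivalently a Serre fibration) requires the isotopy extension theorem in the topological category together with care that the produced lifts are orientation-preserving and fix the boundary pointwise, and one must check that the stabilizer of $P$ is exactly $\mathrm{Homeo}^+(D^2,\partial D^2;P)$ and not a proper subgroup. By contrast, the contractibility of $G$ via the Alexander trick and the configuration-space model of $B_n$ are classical, so once the fibration is established the long-exact-sequence computation is routine.
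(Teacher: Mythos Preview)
Your argument is correct and is precisely the classical proof via the evaluation fibration: Alexander's trick gives contractibility of $\mathrm{Homeo}^+(D^2,\partial D^2)$, isotopy extension makes the evaluation map to $\mathrm{UConf}_n(\mathrm{int}\,D^2)$ a fibre bundle, and the long exact sequence then identifies $\pi_1(\mathrm{UConf}_n)\cong B_n$ with $\pi_0$ of the stabilizer, which is $\mathcal{M}(D^2\setminus\{1,\dots,n\})$.

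Note, however, that there is nothing to compare against in the paper itself: the proposition is stated without proof and simply attributed to \cite{FM}, Section~9.1.3. Your write-up is essentially the argument given in that reference (Farb--Margalit prove it exactly by combining the Alexander trick with the Birman-type long exact sequence of the evaluation fibration), so in that sense you have reproduced the intended proof. The only points one might tighten are the ones you already flag: the local triviality of the evaluation map and the identification of the connecting map $\partial$ as a group homomorphism, both of which are standard.
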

\subsection{Inverse mapping class monoids}
Next we review inverse mapping class monoid defined by V. V. Vershinin \cite{V2}. Let $S=S_{g,b,n}$ be the same surface as above. Let $f$ be a orientation-preserving homeomorphism of $S$ such that 
\begin{equation*}
f(\{i_1,\dots,i_k\})=\{j_1,\dots,j_k\},
\end{equation*} 
where $k\le n$. Denote the set of isotopy classes of such maps by $\mathcal{IM}(S)$ and is called inverse mapping class monoid. The same way let $h$ be a homeomorphism of $S$ which maps $l$ points, $l\le n$, say $\{s_1,\dots,s_l\}$ to $l$ points. Consider the intersection of the sets $\{j_1,\dots,j_k\}$ and $\{s_1,\dots,s_l\}$, let it be the set of cardinality $m$ ($m\le k$), it may be empty. Then the composition of $f$ and $h$ maps $m$ points to $m$ points (may be different).
\begin{proposition}[\cite{V2} Section 6]
Let $S^2$ be a sphere. Then
\begin{equation*}
\mathcal{IM}(S^2\backslash\{1,2,3\})\simeq R_3.
\end{equation*}
\end{proposition}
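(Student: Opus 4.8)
The symbol $R_3$ denotes the rook monoid (equivalently, the symmetric inverse monoid) on the three-element set $\{1,2,3\}$: its elements are the partial bijections $\sigma\colon A\to B$ with $A,B\subseteq\{1,2,3\}$ and $|A|=|B|$, and its product is composition of partial maps. The plan is to produce an explicit isomorphism by recording, for each class in $\mathcal{IM}(S^2\backslash\{1,2,3\})$, the partial bijection it induces on the marked points, and then to verify that this assignment is a well-defined bijective monoid homomorphism. A useful organizing principle is that $R_3$ is graded by rank $k=0,1,2,3$, with maximal subgroup $S_k$ sitting inside the rank-$k$ part; correspondingly $\mathcal{IM}$ is graded by the cardinality of the tracked subset, so the two structures can be matched rank by rank.

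Concretely, I would define $\Psi\colon \mathcal{IM}(S^2\backslash\{1,2,3\})\to R_3$ by sending the class of a homeomorphism $f$ with $f(\{i_1,\dots,i_k\})=\{j_1,\dots,j_k\}$ to the partial bijection with domain $\{i_1,\dots,i_k\}$ given by $i_\ell\mapsto f(i_\ell)$. First I would check that $\Psi$ is well defined: an ambient isotopy moves points continuously while the marked points remain a fixed discrete set, so the induced correspondence on marked points cannot change along an isotopy. Next I would check that $\Psi$ respects products. Here the point is that the composition law on $\mathcal{IM}(S)$ is governed precisely by intersecting the image set $\{j_1,\dots,j_k\}$ of the first map with the domain set $\{s_1,\dots,s_l\}$ of the second, which is exactly the rule defining composition of partial bijections in $R_3$; unwinding the two definitions shows that $\Psi$ carries the monoid product to composition of partial maps.

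For surjectivity, let $\sigma\colon A\to B$ be an arbitrary partial bijection of $\{1,2,3\}$ with $|A|=|B|=k$. Since $A$ and $B$ have equal cardinality, $\sigma$ extends to a genuine permutation $\widetilde{\sigma}\in S_3$. Using the identification $\mathcal{M}(S^2\backslash\{1,2,3\})\simeq S_3$, I choose a homeomorphism $f$ realizing $\widetilde{\sigma}$, and I regard $f$ as an element of $\mathcal{IM}$ that tracks only the subset $A$; then $\Psi([f])=\sigma$.

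The main obstacle is injectivity. Suppose $\Psi([f])=\Psi([g])=\sigma$, where both classes track the same domain $A$ of size $k$. I must show that the isotopy class of a surface homeomorphism tracking only $k\le 3$ of the marked points is completely determined by its action on those $k$ points. This reduces to the rigidity statement $\mathcal{M}(S^2\backslash\{1,\dots,k\})\simeq S_k$ for each $k\le 3$: the sphere carries no essential Dehn twists, and with at most three punctures there is no twisting beyond permutation of the punctures. For $k=3$ this is exactly the preceding proposition; for $k=2$ one checks $\mathcal{M}(S^2\backslash\{1,2\})\simeq S_2$, and for $k\le 1$ the group is trivial. Granting these, any two homeomorphisms inducing the same $\sigma$ differ by an isotopy, so $\Psi$ is injective. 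The delicate point throughout is keeping careful track of which marked points are forgotten: a class in $\mathcal{IM}$ constrains only the behaviour on its tracked subset, so the reduction to $\mathcal{M}(S^2\backslash\{1,\dots,k\})$ must first discard the untracked punctures before invoking rigidity, and verifying that this discarding is compatible with the monoid product is where the bookkeeping is heaviest.
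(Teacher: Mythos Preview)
Your argument is correct. Note, however, that the paper does not supply its own proof of this proposition: it is quoted directly from \cite{V2}, Section~6, without argument. That said, your strategy---define the map to $R_3$ by reading off the partial bijection on marked points, get surjectivity by extending partial bijections to full permutations and invoking $\mathcal{M}(S^2\backslash\{1,2,3\})\simeq S_3$, and get injectivity rank-by-rank from the rigidity $\mathcal{M}(S^2\backslash\{1,\dots,k\})\simeq S_k$ for $k\le 3$---is exactly the template the paper uses one section later to prove the $\mathbb{P}M$-analogue $\mathcal{PM}(S^2\backslash\{1,2,3\})\simeq\mathscr{R}_3$ (Proposition~\ref{pmc}). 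There the injectivity step is phrased as: if $\phi(\bm{f})=\phi(\bm{g})$ then the components $f_l$ and $g_l$ are isotopic as elements of the ordinary or inverse mapping class group, and these isotopies assemble. Your rank-by-rank reduction is the same mechanism, so your proof aligns with the paper's own methods even though the paper defers this particular statement to the literature.
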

The inverse braid monoid $IB_n$ was constructed by D. Easdown and T. G. Lavers \cite{EL}. It arises from an operation of braids : deleting one or several strings. An element of $IB_n$ is called a partial braid, and a product of two partial braids is defined (Section $1$ of \cite{EL}). Thus $IB_n$ is the monoid with product of partial braids.
\begin{proposition}[\cite{V2} Theorem 2.2.] \label{inv}
Let $IB_n$ be inverse braid monoid. Then 
\begin{equation*}
\mathcal{IM}(D^2\backslash\{1,2,\dots,n\})\simeq IB_n.
\end{equation*}
\end{proposition}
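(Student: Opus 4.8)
The plan is to mimic the proof of the classical isomorphism $\mathcal{M}(D^2\setminus\{1,\dots,n\})\simeq B_n$, stratifying by which marked points are tracked and replacing full braids by partial braids. First I would build a map $\Psi\colon \mathcal{IM}(D^2\setminus\{1,\dots,n\})\to IB_n$ as follows. Let $[f]$ be a class represented by an orientation-preserving homeomorphism $f$ of $D^2$ fixing $\partial D^2$ with $f(\{i_1,\dots,i_k\})=\{j_1,\dots,j_k\}$. By the Alexander lemma, $f$ is isotopic to the identity through homeomorphisms of $D^2$ fixing the boundary; fix such an isotopy $H\colon D^2\times[0,1]\to D^2$ with $H_0=\mathrm{id}$ and $H_1=f$. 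The tracked points $i_1,\dots,i_k$ trace $k$ arcs $t\mapsto H_t(i_\ell)$ in $D^2\times[0,1]$ running from the top positions $i_1,\dots,i_k$ to the bottom positions $j_1,\dots,j_k$, while the remaining $n-k$ points trace arcs ending at non-marked points. Declaring the latter arcs to be \emph{deleted}, I obtain a partial braid on $n$ strings with domain $\{i_1,\dots,i_k\}$ and range $\{j_1,\dots,j_k\}$, which I set to be $\Psi([f])$.

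The first thing to verify is that $\Psi$ is well defined. Independence of the isotopy $H$ follows exactly as in the classical case: the space $\mathrm{Homeo}^+(D^2,\partial D^2)$ is contractible by the Alexander trick, so any two isotopies with the same endpoints are homotopic rel endpoints and induce the same braid on the tracked strands, and deleting the untracked strands does not affect this. Independence of the representative $f$ within its isotopy class follows by concatenating a tracking-preserving isotopy of $f$ with $H$. I would then check that $\Psi$ is a monoid homomorphism; up to the convention chosen for composition, stacking the isotopies for $f$ and $h$ realizes the composite motion.

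The product compatibility is the crux. In $\mathcal{IM}$ the composite of $f$ and $h$ tracks only those points that remain marked after both maps are applied, namely $m=\#(\{j_1,\dots,j_k\}\cap \Dom(h))$ of them; in $IB_n$ the product of partial braids is obtained by stacking and then deleting every strand that fails to join top to bottom. I would show these two operations agree: a tracked strand survives the stacking precisely when its bottom endpoint in the first braid meets a present top endpoint of the second, which is exactly the intersection condition defining the product in $\mathcal{IM}$. Matching these two ``partial'' structures under composition is the step where the bookkeeping is most delicate, and this is where I expect the main difficulty to lie.

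Finally I would establish that $\Psi$ is a bijection by reduction to the classical braid isomorphism on each stratum. For injectivity, suppose $\Psi([f])$ is the identity partial braid on a domain $I$; then the braid carried by the tracked strands is trivial, so by injectivity of $\mathcal{M}(D^2\setminus I)\simeq B_{|I|}$ the behaviour of $f$ on the tracked points is isotopically trivial, and the untracked points carry no further data, whence $[f]$ is trivial. For surjectivity, given a partial braid $\tau$ with domain $I$ and range $J$, I fill in the $n-k$ deleted strands arbitrarily to a genuine braid $\beta\in B_n$, realize $\beta$ as a mapping class $g$ by the classical isomorphism, and then isotope $g$ so that exactly the points outside $I$ are pushed to nearby non-marked points; this produces an inverse mapping class with $\Psi$-image $\tau$. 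Assembling these steps, $\Psi$ is the desired isomorphism.
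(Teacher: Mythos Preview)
The paper does not supply its own proof of this proposition; it is quoted verbatim as \cite{V2} Theorem~2.2 and no argument is given in the present paper. Consequently there is nothing here to compare your proposal against.

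That said, your sketch is essentially the standard argument and is sound in outline. One small caution: in the injectivity step you write ``suppose $\Psi([f])$ is the identity partial braid''. For a monoid homomorphism this is not sufficient; triviality of the preimage of the identity does not imply injectivity. What you actually want (and what your stratification idea already gives) is the direct statement: if $\Psi([f])=\Psi([g])$ then $f$ and $g$ have the same domain $I$ and range $J$, and the classical isomorphism $\mathcal{M}(D^2,I)\simeq B_{|I|}$ applied to the stratum of homeomorphisms sending $I$ to $J$ forces $[f]=[g]$. With that adjustment the argument goes through.
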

\section{$\mathbb{P}M$-mapping class monoids}
\subsection{Definitions}
We now define the $\mathbb{P}M$-mapping class monoid. Let $S=S_{g,b,n}$ as above. Let $m\ge 1$ and $\bm{f}=(f_1,\dots,f_m)$ be a sequence of orientation-preserving homeomorphism of $S$ which satisfy the following. There exists integers $1\le k_1<\dots<k_m<n$ and $\sigma,\tau\in S_n$ such that 
\begin{equation*}
f_l(\{\sigma(k_{l-1}+1),\dots,\sigma(k_l)\})=(\{\tau(k_{l-1}+1),\dots,\tau(k_l)\}),
\end{equation*}
and $f_l|_{\partial S}=id$ for all $l=1,\dots,m$ $(k_0=0,k_m=n)$. We consider the isotopy of $\bm{f}=(f_1,\dots,f_m)$ as simultaneously for all $l=1,\dots,m$. Denote the set of isotopy classes of such map by $\mathcal{PM}(S)$ and is called $\mathbb{P}M$-mapping class monoid. Let $\bm{h}=(h_1,\dots,h_{m^{'}})\in \mathcal{PM}(s)$ such that 
\begin{equation*}
h_l(\{\sigma^{'}(s_{l-1}+1,\dots,\sigma^{'}(k_l)\})=\{\tau^{'}(s_{l-1}+1),\dots,\tau^{'}(s_l)\}
\end{equation*}
for all $l=1,\dots,m^{'}$. Then product is defined by
\begin{equation}
\begin{split}
&(f_1,\dots,f_m)\cdot(h_1,\dots,h_{m^{'}}) \\
&=(f_1\circ h_1,f_2\circ h_1,\dots,f_m\circ h_1,\dots,f_1\circ h_{m^{'}},\dots,f_m\circ h_{m^{'}}), \label{seq}
\end{split}
\end{equation}
where 
\begin{equation*}
\begin{split}
&f_l\circ h_{l^{'}}(\{\sigma(k_{l-1}+1),\dots,\sigma(k_l)\}\cap\{\sigma^{'}(s_{l^{'}-1}+1),\dots,\sigma^{'}(s_l^{'})\}) \\
&=\{\tau(k_{l-1}+1),\dots,\tau(k_l)\}\cap\{\tau^{'}(s_{l^{'}-1}+1),\dots,\tau^{'}(s_l^{'})\},
\end{split}
\end{equation*}
and if 
\begin{equation*}
\{\sigma(k_{l-1}+1),\dots,\sigma(k_l)\}\cap\{\sigma^{'}(s_{l^{'}-1}+1),\dots,\sigma^{'}(s_l^{'})\}=\emptyset
\end{equation*}
then $f_l\circ h_{l^{'}}$ is removed in the right hand side of (\ref{seq}).
\subsection{Examples}
\begin{proposition} \label{pmc}
Let $S^2$ be a sphere. Then
\begin{equation*}
\mathcal{PM}(S^2\backslash\{1,2,3\})\simeq \mathscr{R}_3.
\end{equation*}
\end{proposition}
\begin{proof}
There is a surjective map 
\begin{equation*}
\phi:\mathcal{PM}(S^2\backslash\{1,2,3\})\rightarrow \mathscr{R}_3
\end{equation*}
using cases $\mathcal{M}(S^2\backslash\{1,2,3\})$ and $\mathcal{IM}(S^2\backslash\{1,2,3\})$. For $\bm{f}$, $\phi(\bm{f})$ is defined by the moves of $\{1,2,3\}$ by $\bm{f}$. We next prove the injectivity of $\phi$. Suppose
\begin{equation*}
\bm{f}=(f_1,\dots,f_m), \bm{g}=(g_1,\dots,g_{m^{'}})\in\mathcal{PM}(S^2\backslash\{1,2,3\}),
\end{equation*}
and $\phi(\bm{f})=\phi(\bm{g})$. Then $m=m^{'}$. If $m=1$ then as elements of $\mathcal{M}(S^2\backslash\{1,2,3\})$, $f_1$ and $g_1$ are isotopic. If  $m>1$ then as elements of inverse mapping class group, $f_l$ and $g_l$ are isotopic for $l=1,\dots,m$. Let the isotopies of $f_l$ and $g_l$ as $F_t^l$ for $l=1,\dots,m$. Then 
\begin{equation*}
\bm{F}_t^l=(F_t^1,\dots,F_t^m)
\end{equation*}
is a isotopy of $\bm{f}$ and $\bm{g}$.   
\end{proof}
\begin{proposition}
Let $\mathscr{RB}_n$ be $\mathbb{P}M$-braid monoid of $n$ strings. Then 
\begin{equation*}
\mathcal{PM}(D^2\backslash\{1,2,\dots,n\})\simeq \mathscr{RB}_n.
\end{equation*}
\end{proposition}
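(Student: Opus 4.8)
The plan is to build an explicit isomorphism layer by layer, reducing everything to the inverse braid monoid correspondence of Proposition \ref{inv}. A $\mathbb{P}M$-braid $\beta=\{\beta_1,\dots,\beta_n\}$ decomposes, according to its layered structure $1\le k_1<\dots<k_{m-1}<n$ (with $k_0=0$, $k_m=n$), into $m$ \emph{partial braids}
\[
\beta^{(l)}=\{\beta_{k_{l-1}+1},\dots,\beta_{k_l}\}\qquad(l=1,\dots,m),
\]
the $l$-th of which lives between the planes $z=z_0^{(l)}$ and $z=z_1^{(l)}$ and realizes the partial one--one map $\Phi_l^{\beta}$. Forgetting the missing strings, each $\beta^{(l)}$ is an element of the inverse braid monoid $IB_n$, so Proposition \ref{inv} assigns to it an isotopy class $f_l$ of an orientation-preserving homeomorphism of $D^2\setminus\{1,\dots,n\}$ that is the identity on the boundary and sends the $l$-th block of punctures onto another block according to $\Phi_l^{\beta}$. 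First I would define
\[
\Phi:\mathscr{RB}_n\longrightarrow \mathcal{PM}(D^2\setminus\{1,\dots,n\}),\qquad [\beta]\longmapsto(f_1,\dots,f_m),
\]
and check that it is well defined: two equivalent $\mathbb{P}M$-braids share the maps $\Phi_i^{\beta}=\Phi_i^{\gamma}$ and are layer-wise homotopic, and this homotopy, read through Proposition \ref{inv}, is precisely a simultaneous isotopy of the sequences $(f_1,\dots,f_m)$ and $(g_1,\dots,g_m)$, which is exactly the equivalence defining $\mathcal{PM}$.

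Next I would verify that $\Phi$ is a monoid homomorphism. The product $\beta\gamma$ is assembled from the elementary operations $(k_il_j)$, each of which stacks the $j$-th layer of $\gamma$ directly beneath the $i$-th layer of $\beta$ and composes the corresponding partial braids; under the braid--mapping-class dictionary this is exactly the composition $f_i\circ h_j$. The geometric step of \emph{removing any arc that does not join the upper plane to the lower plane} corresponds, on the level of punctures, to restricting to the intersection of the $i$-th block of $\beta$ with the $j$-th block of $\gamma$, while \emph{removing empty systems of arcs} corresponds to discarding the factors with empty intersection. Comparing this with the defining product (\ref{seq}) of $\mathcal{PM}$, which is built from the same composition $f_l\circ h_{l'}$ on the same intersected blocks and likewise drops empty intersections, shows $\Phi(\beta\gamma)=\Phi(\beta)\Phi(\gamma)$.

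Finally I would establish bijectivity by constructing a two-sided inverse, again layer by layer. Given $\bm{f}=(f_1,\dots,f_m)\in\mathcal{PM}(D^2\setminus\{1,\dots,n\})$, each $f_l$ is a partial mapping class, so Proposition \ref{inv} (in the direction $\mathcal{IM}\to IB_n$) returns a partial braid; placing these partial braids on the successive levels $z=z_0^{(l)},z=z_1^{(l)}$ produces a $\mathbb{P}M$-braid whose class maps back to $\bm{f}$. That this construction is inverse to $\Phi$ on both sides follows from Proposition \ref{inv} being an isomorphism on each single layer, since both composites act as the identity block by block.

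I expect the main obstacle to be the product compatibility. One must check carefully that the purely geometric bookkeeping of the operations $(k_il_j)$---the stacking order, the removal of non-through arcs, and the removal of empty systems---reproduces exactly the set-theoretic intersection pattern of the blocks and the composition rule $f_l\circ h_{l'}$ in (\ref{seq}), including the correct indexing of the resulting layers. The single-layer case is supplied directly by Proposition \ref{inv}, but tracking how the domains and ranges of the partial braids interact across layers under the product is the delicate point and will require the most care.
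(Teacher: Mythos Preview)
Your proposal is correct and follows essentially the same approach as the paper: both build the isomorphism layer by layer by invoking the inverse braid monoid isomorphism of Proposition~\ref{inv} on each component. The only cosmetic difference is direction---the paper defines $\varphi:\mathcal{PM}(D^2\setminus\{1,\dots,n\})\to\mathscr{RB}_n$ by $(f_1,\dots,f_m)\mapsto(\psi(f_1),\dots,\psi(f_m))$ and then argues surjectivity from that of $\psi$ and injectivity exactly as in Proposition~\ref{pmc}, whereas you start from $\mathscr{RB}_n$ and build an explicit two-sided inverse; your treatment of the product compatibility is also more detailed than the paper's, which leaves that check implicit.
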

\begin{proof}
Let 
\begin{equation*}
\psi:\mathcal{IM}(D^2\backslash\{1,2,\dots,n\})\rightarrow IB_n
\end{equation*}
be an isomorphism of Proposition \ref{inv}. We define a map $\varphi:\mathcal{PM}(D^2)\rightarrow\mathscr{RB}_n$ by
\begin{equation*}
\varphi((f_1,\dots,f_m))=
(\psi(f_1), 
\psi(f_2),
\dots, 
\psi(f_m)),
\end{equation*}
where $\psi(f_i)$ denotes the i-layer of the element of $\mathbb{P}M$-monoid.
$\varphi$ is surjective since $\psi$ is surjective. We can prove injectivity of $\varphi$ by the same method of proof of Proposition \ref{pmc}.
\end{proof}

As a summary we have the following diagram
\begin{equation*}
\xymatrix{
S_n \ar@{^{(}-_>}[d] & B_n \ar@{->>}[l] \ar@{^{(}-_>}[d] \ar@{=}[r]  & \mathcal{M}(D^2\backslash \{1,2,\dots,n\})\\
\mathscr{R}_n  & \mathscr{RB}_n \ar@{->>}[l] \ar@{=}[r]  & \mathcal{PM}(D^2\backslash \{1,2,\dots,n\}).
}
\end{equation*}

\section{Main results}

\subsection{Dehn-Nilsen-Baer theorem for mapping groups}
\subsubsection{The non punctured case}
Let $S_g:=S_{g,0,0}$. The extended mapping class group, denoted $\mathcal{M}^{\pm}(S_g)$, is the group of isotopy classes of all homeomorphisms of $S_g$, including the orientation-reversing ones. For a group $G$, let $\Out(G)$ denote the outer automorphism group of $G$.
\begin{theorem}[\cite{FM} Theorem 8.1]
Let $g\geq 1$. The groups $\mathcal{M}^{\pm}(S_g)$ and $\Out(\pi_1(S_g))$ are isomorphism.
\end{theorem}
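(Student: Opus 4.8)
The plan is to construct an explicit homomorphism $\Phi:\mathcal{M}^{\pm}(S_g)\to\Out(\pi_1(S_g))$ and prove that it is both injective and surjective, treating the torus case $g=1$ separately from the hyperbolic case $g\ge 2$. To build $\Phi$, fix a basepoint $p\in S_g$; any homeomorphism $f$ (after an isotopy moving $f(p)$ back to $p$, together with a choice of path) induces an automorphism $f_*$ of $\pi_1(S_g,p)$, and different choices of basepath change $f_*$ only by an inner automorphism. Since isotopic homeomorphisms induce the same outer class, $\Phi$ descends to a well-defined homomorphism on $\mathcal{M}^{\pm}(S_g)$.

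For injectivity, suppose $\Phi([f])=1$, so that $f_*$ is inner; after adjusting the basepath we may assume $f_*=\mathrm{id}$ on $\pi_1$. Because $S_g$ is aspherical (a $K(\pi,1)$ for $g\ge 1$), a self-map inducing the identity on $\pi_1$ is homotopic to the identity. The remaining point is that homotopic homeomorphisms of a closed surface are in fact isotopic (Baer's theorem); invoking this gives $[f]=1$, so $\Phi$ is injective.

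Surjectivity is the heart of the matter. For $g=1$ it is elementary: $\pi_1(S_1)\cong\mathbb{Z}^2$, $\Out(\mathbb{Z}^2)\cong\GL_2(\mathbb{Z})$, and every matrix in $\GL_2(\mathbb{Z})$ is realized by a linear homeomorphism of the torus $\mathbb{R}^2/\mathbb{Z}^2$. For $g\ge 2$ I would pass to the universal cover $\mathbb{H}^2$, on which $\pi_1(S_g)$ acts as a cocompact lattice of isometries. Given $\phi\in\Aut(\pi_1(S_g))$, first realize it by a homotopy equivalence of $S_g$ (again using asphericity) and lift to a map $\widetilde{F}:\mathbb{H}^2\to\mathbb{H}^2$. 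By the Milnor--\v{S}varc lemma $\widetilde{F}$ is a quasi-isometry satisfying $\widetilde{F}\circ\gamma=\phi(\gamma)\circ\widetilde{F}$ for all $\gamma\in\pi_1(S_g)$, so it extends to a homeomorphism $\partial\widetilde{F}$ of the boundary circle $S^1_\infty=\partial\mathbb{H}^2$ that is $\phi$-equivariant in the same sense.

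The main obstacle is to promote this equivariant boundary homeomorphism back to a genuine homeomorphism of $S_g$ inducing $\phi$. Here I would use $\partial\widetilde{F}$ to construct a $\phi$-equivariant homeomorphism of $\mathbb{H}^2$ itself --- for instance by the standard straightening that replaces $\widetilde{F}$ on each bi-infinite geodesic by the geodesic with the prescribed pair of endpoints, then extending across the complementary region --- and then descend to the quotient to obtain a homeomorphism $f$ of $S_g$ with $f_*=\phi$ up to conjugacy. Whether $f$ preserves or reverses orientation is detected by whether $\partial\widetilde{F}$ preserves or reverses the cyclic order on $S^1_\infty$, which is exactly the distinction recorded by the \emph{extended} mapping class group $\mathcal{M}^{\pm}(S_g)$; this is what forces the target to be the full $\Out(\pi_1(S_g))$ rather than an index-two subgroup. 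Verifying that the straightened map is genuinely a homeomorphism (injectivity, continuity, and properness) and that it descends correctly to the surface is the technically delicate point.
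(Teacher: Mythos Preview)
The paper does not give its own proof of this statement: it is quoted as background (Theorem 8.1 of \cite{FM}) in the review section on mapping class groups, with no argument supplied. So there is no ``paper's proof'' to compare against; the result is simply imported from Farb--Margalit.

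That said, your sketch is essentially the standard proof as presented in \cite{FM}. The construction of $\Phi$, the injectivity via asphericity plus Baer's homotopy-implies-isotopy theorem, the separate $g=1$ linear case, and for $g\ge 2$ the passage to the universal cover, quasi-isometry via Milnor--\v{S}varc, and the $\phi$-equivariant extension to $\partial\mathbb{H}^2$ are all exactly what Farb--Margalit do. The one place your outline is softer than the reference is the last step: Farb--Margalit do not attempt to ``straighten'' $\widetilde{F}$ to a homeomorphism of $\mathbb{H}^2$ directly. Instead they use the boundary map $\partial\widetilde{F}$ to act on the space of unordered pairs of distinct boundary points, identify that space with the set of geodesics, and thereby obtain an equivariant bijection on simple closed geodesics of $S_g$ that preserves disjointness and intersection number; a homeomorphism of $S_g$ is then built combinatorially from a pants decomposition. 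Your proposed geodesic-straightening route can be made to work but, as you note, checking injectivity and continuity of the extension across complementary regions is genuinely delicate, and is not the argument in \cite{FM}.
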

\subsubsection{The punctured case}
Let $\Out^{\star}(\pi_1(S))$ be the subgroup of $\Out(\pi_1(S))$ consisting of elements that preserve the set of conjugacy classes of the simple closed curves surrounding individual punctures. Let $S_{g,n}:=S_{g,0,n}$.
\begin{theorem}[\cite{FM} Theorem 8.8]
The groups $\mathcal{M}^{\pm}(S_{g,n})$ and $\Out^{\star}(\pi_1(S_{g,n}))$ are isomorphism.
\end{theorem}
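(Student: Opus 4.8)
The plan is to construct the isomorphism directly as the map sending a homeomorphism to its induced action on $\pi_1(S_{g,n})$, and then to verify injectivity and surjectivity separately, with surjectivity being the substantive part. First I would fix a basepoint and define $\rho:\mathcal{M}^{\pm}(S_{g,n})\to\Out(\pi_1(S_{g,n}))$ by sending the class of a homeomorphism $f$ to the outer class of $f_{*}$; passing to $\Out$ rather than $\Aut$ is forced because $f_{*}$ depends on a choice of path from the basepoint to its image, and changing this path alters $f_{*}$ by an inner automorphism. Since any homeomorphism fixing the ends of the surface permutes the punctures and hence carries each loop encircling a puncture to a loop encircling a puncture, the image lands in $\Out^{\star}(\pi_1(S_{g,n}))$. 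Well-definedness on isotopy classes is immediate, because isotopic homeomorphisms are in particular homotopic and homotopic maps induce the same outer automorphism.

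For injectivity I would argue that a homeomorphism $f$ with $\rho([f])=1$ acts trivially on $\pi_1$, so, since $S_{g,n}$ is a $K(\pi,1)$, $f$ is homotopic to the identity. The classical fact that for surfaces homotopic homeomorphisms are isotopic (Baer's theorem, recorded in the closed case in \cite{FM}) then gives $[f]=1$. Some care is needed so that the homotopy respects the punctures, but the peripheral bookkeeping defining $\Out^{\star}$ is exactly what guarantees this.

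The hard part will be surjectivity: realizing every peripheral-structure-preserving outer automorphism by an actual homeomorphism. Here I would put a complete finite-area hyperbolic metric on $S_{g,n}$ (available whenever $2g-2+n>0$) and represent $\pi_1(S_{g,n})$ as a Fuchsian group $\Gamma$ acting on $\mathbb{H}^2$, with the loops around punctures corresponding to the parabolic conjugacy classes. Given $\Phi\in\Out^{\star}$, a Nielsen-type argument produces a $\Phi$-equivariant homeomorphism of the boundary circle $S^{1}_{\infty}=\partial\mathbb{H}^2$; the condition $\star$ ensures that parabolic fixed points are carried to parabolic fixed points, so that cusps are matched to cusps. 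I would then extend this boundary map equivariantly across $\mathbb{H}^2$ and descend it to a homeomorphism of $S_{g,n}$ inducing $\Phi$. The genuine obstacle is this extension step together with the cusp bookkeeping: one must ensure the resulting map is a homeomorphism of the open surface that extends continuously over the punctures and has the prescribed effect on $\pi_1$, and this is precisely where the hyperbolic geometry and the peripheral hypothesis do the essential work.

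Finally, since $\rho$ is a homomorphism that is both injective and surjective, it is the desired isomorphism. Throughout, the only place the punctured case differs seriously from the closed Dehn-Nilsen-Baer theorem is the restriction to $\Out^{\star}$, which is forced on the mapping-class side by the permutation of punctures and must be imposed on the automorphism side to rule out outer automorphisms failing to respect the peripheral parabolics.
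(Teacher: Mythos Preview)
The paper does not prove this theorem; it is quoted as background from \cite{FM}, Theorem~8.8, with no argument given. There is therefore nothing in the paper to compare your proposal against.

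For what it is worth, your sketch is the standard proof, and it is the one in \cite{FM}: define $\rho$ by $[f]\mapsto [f_*]$, check it lands in $\Out^{\star}$ because homeomorphisms permute punctures, get injectivity from the $K(\pi,1)$ property plus Baer's homotopy-implies-isotopy theorem, and get surjectivity (the substantive step) by putting a complete hyperbolic metric on $S_{g,n}$, lifting $\Phi$ to a $\Phi$-equivariant homeomorphism of $\partial\mathbb{H}^2$, and extending over $\mathbb{H}^2$ so as to descend to the quotient. Your remark that the peripheral condition $\star$ is exactly what forces parabolic fixed points to parabolic fixed points, and hence cusps to cusps, is the right bookkeeping. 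The only omission is that the hyperbolic argument needs $\chi(S_{g,n})<0$; the finitely many sporadic cases (sphere with at most three punctures, torus with at most one) are handled directly in \cite{FM}.
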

\subsection{Dehn-Nilsen-Baer theorem for inverse mapping monoids}
We will review the Dehn-Nilsen-Bare theorem for inverse mapping monoids proved by R. Karoui and V. Vershinin \cite{V2}. We define the $(g,b,n)$-surface group as a group with the presentation
\begin{equation*}
\pi_{g,b,n}=\langle a_1,c_1,\dots,a_g,c_g,v_1,\dots,v_b,u_1,\dots,u_g\mid \displaystyle \prod_{i=1}^{n}u_i \displaystyle \prod_{l=1}^{b}v_l \displaystyle \prod_{m=1}^{g}[a_m,c_m] \rangle.
\end{equation*}
It is the fundamental group of a surface $S_{g,b,n}$. 

Let $H$ be a quotient group of $\pi_{g,b,n}$, defined by the conditions
\begin{equation*}
u_i=1 \text{ for all }i\not\in\{i_1,\dots,i_k\},
\end{equation*}
and let $K$ be a quotient group of $\pi_{g,b,n}$, defined by the conditions 
\begin{equation*}
u_j=1 \text{ for all }j\not\in\{j_1,\dots,j_k\}.
\end{equation*}
Let $t\in R_n$. Let $I_k=\{i_1,\dots,i_k\}$ be the domain of $t$ and $J_k=\{j_1,\dots,j_k\}$ be the image of $t$. Let $w_i$ be a word on letters 
\begin{equation*}
a_1,c_1,\dots,a_g,c_g,v_1,\dots,v_b,u_{j_1},\dots,u_{j_k}.
\end{equation*}
Let $I\Aut\pi_{g,b,n}$ be the monoid consisting of isomorphisms
\begin{equation*}
f_t:H\rightarrow K,
\end{equation*}
such that
\begin{equation*}
\begin{cases}
f_t(v_m)=v_m \text{ for }m=1,\dots,b, \\
f_t(u_i)=w_i^{-1}u_{t(i)}w_i, \text{ if } i \text{ is among } i_1,\dots,i_k,
\end{cases}
\end{equation*}
for all subgroups $H$ and $K$ of the type defined above, for all $k=0,1,\dots,n$. The composition of $f_t$ and $g_s$, $t,s\in R_n$, is defined for $u_i$ belonging to the domain of $t\circ s$. We put $u_{j_m}=1$ in a word $w_i$ if $u_{j_m}$ does not belong to the domain of definition of $g_s$.

We consider the case of empty boundary. Let $\pi_{g,n}:=\pi_{g,0,n}$. We will define an equivalence relation in $I\Aut\pi_{g,n}$. Let $f_1,f_2\in I\Aut\pi_{g,n}$. We define $f_1$ and $f_2$ are equivalent if there exists an element $q\in H$ such that 
\begin{equation*}
f_1(q^{-1}xq)=f_2(x).
\end{equation*}
We denote the quotient monoid by $I\Out\pi_{g,n}$.

Let us take an arbitrary element $\eta$ of $\mathcal{IM}(S_{g,n})$. It is represented by a homeomorphism of a surface
\begin{equation*}
h:S_{g,n}\backslash\{i_1,\dots,i_k\}\rightarrow S_{g,n}\backslash\{j_1,\dots,j_k\}.
\end{equation*}
It defines the bijection $\hat{h}$ between the conjugacy classes of 
\begin{equation*}
\pi_1(S_{g,n}\backslash\{i_1,\dots,i_k\}),
\end{equation*}
and 
\begin{equation*}
\pi_1(S_{g,n}\backslash \{j_1,\dots,j_k\}). 
\end{equation*}
We define $\hat{h}$ as an image of $\eta$ in $I\Out\pi_{g,n}$. Thus we can define a homomorphism of monoids 
\begin{equation*}
\psi_{g,n}:\mathcal{IM}(S_{g,n})\rightarrow I\Out\pi_{g,n};\eta \mapsto\hat{h}.
\end{equation*} 
The the following theorem holds.
\begin{theorem}[\cite{V2} Theorem 3.2] \label{DNBI}
The homomorphism $\psi_{g,n}$ is an isomorphism of monoids.
\end{theorem}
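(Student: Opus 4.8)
The plan is to upgrade the classical Dehn--Nilsen--Baer theorem for punctured surfaces (Theorem~8.8 of \cite{FM}) to the inverse-monoid setting, handling the monoid structure and the partial bijections $t\in R_n$ one rank stratum at a time. I would first establish that $\psi_{g,n}$ is a well-defined homomorphism of monoids, then prove injectivity using the fact that homotopic homeomorphisms of surfaces are isotopic, and finally prove surjectivity by reducing, on each fixed domain/image stratum, to the group-level realization theorem.

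\emph{Well-definedness and the homomorphism property.} A representative homeomorphism $h:S_{g,n}\backslash\{i_1,\dots,i_k\}\rightarrow S_{g,n}\backslash\{j_1,\dots,j_k\}$ induces an isomorphism $h_*:H\rightarrow K$ of the corresponding quotients of $\pi_{g,n}$. Because $h$ is orientation-preserving and carries the puncture $i_a$ to the puncture $j_{t(a)}$, the loop $u_{i_a}$ is sent to a loop freely homotopic to $u_{t(i_a)}$, so $h_*(u_{i_a})=w_{i_a}^{-1}u_{t(i_a)}w_{i_a}$; thus $h_*\in I\Aut\pi_{g,n}$, with orientation-preservation encoded by the fact that $u_{t(i_a)}$ appears conjugated rather than inverted. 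The choice of basepoint and of free-homotopy representatives changes $h_*$ exactly by the conjugation defining the equivalence on $I\Aut\pi_{g,n}$, and an isotopy of $h$ moves $h_*$ within one class; hence $\hat h\in I\Out\pi_{g,n}$ is well defined. For the homomorphism property I would check that the composite of two representatives, after re-inserting the marked points that leave the common domain, induces the composite isomorphism; this matches the product in $\mathcal{IM}(S_{g,n})$ with the composition law on $I\Out\pi_{g,n}$, where $u_{j_m}$ is set equal to $1$ precisely when it falls outside the domain.

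\emph{Injectivity.} Suppose $\psi_{g,n}(\eta)=\psi_{g,n}(\eta')$. Since the partial bijection $t$ is recovered from $f_t$ through the relation $f_t(u_i)=w_i^{-1}u_{t(i)}w_i$, the two representatives share the same domain $I_k$, image $J_k$, and rank $k$; deleting the unused marked points, we may view both as homeomorphisms of the same $k$-punctured surface. Their induced maps on $\pi_1$ agree up to conjugacy, so the homeomorphisms are homotopic, and since homotopic homeomorphisms of surfaces are isotopic we conclude $\eta=\eta'$ in $\mathcal{IM}(S_{g,n})$.

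\emph{Surjectivity, and the main obstacle.} Given $f_t\in I\Out\pi_{g,n}$ with domain $I_k$ and image $J_k$, the quotients $H$ and $K$ are the fundamental groups of two $k$-punctured genus-$g$ surfaces, and $f_t$ is an isomorphism between them preserving the conjugacy classes of the puncture loops. By the Dehn--Nilsen--Baer theorem for punctured surfaces, this isomorphism is realized by an orientation-preserving homeomorphism carrying the $i_a$-th puncture to the $t(i_a)$-th; re-inserting the remaining $n-k$ marked points yields an element of $\mathcal{IM}(S_{g,n})$ mapping to $f_t$. The principal difficulty is concentrated here: one must invoke the group-level realization on each rank stratum separately and then verify that these stratumwise realizations assemble into a genuine \emph{monoid} isomorphism --- that the realizing homeomorphisms respect all of the puncture data and that the inverse-monoid product is preserved under the correspondence, rather than merely that $\psi_{g,n}$ is a bijection of underlying sets.
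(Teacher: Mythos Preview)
The paper does not supply a proof of this statement: Theorem~\ref{DNBI} is quoted verbatim from Karoui--Vershinin \cite{V2} (their Theorem~3.2) and is then used as a black box in the proof of the subsequent $\mathbb{P}M$-version. There is therefore no in-paper argument to compare against.

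Your outline is the natural proof and is presumably close to what \cite{V2} does: stratify both $\mathcal{IM}(S_{g,n})$ and $I\Out\pi_{g,n}$ by the rank $k$ and the domain/image pair $(I_k,J_k)$ of the underlying partial bijection $t\in R_n$, identify each stratum on both sides with the mapping class group (respectively $\Out^\star$) of the $k$-punctured genus-$g$ surface, and invoke the classical punctured Dehn--Nilsen--Baer theorem (Theorem~8.8 of \cite{FM}) stratumwise. Two points deserve to be made explicit. First, in the injectivity step the implication ``same outer action on $\pi_1$ $\Rightarrow$ homotopic'' uses that punctured surfaces of negative Euler characteristic are aspherical; you should also treat or exclude the low-complexity cases where this fails. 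Second, in the surjectivity step the group-level theorem a priori produces an element of $\mathcal{M}^{\pm}$, and it is precisely the condition $f_t(u_i)=w_i^{-1}u_{t(i)}w_i$ (conjugate to $u_{t(i)}$ rather than to $u_{t(i)}^{-1}$) that pins down the orientation-preserving realization; you gesture at this but it should be stated cleanly. Finally, the ``main obstacle'' you flag at the end is not an obstacle at all: once $\psi_{g,n}$ is a monoid homomorphism (your first step) and a bijection of sets (your stratumwise argument), it is automatically a monoid isomorphism, since the set-theoretic inverse of a bijective monoid homomorphism is again a homomorphism.
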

\subsection{Dehn-Nilsen-Baer theorem for $\mathbb{P}M$-mapping monoids}
Now we state the main result. Let $\mathbb{A}=(A_1,\dots,A_m)\in\mathscr{R}_n$. Let 
\begin{equation*}
I=(\{i_1,\dots,i_{k_1}\},\{i_{{k_1}+1},\dots,i_{k_2}\},\dots,\{i_{k_{m}+1},\dots,i_n\})
\end{equation*}
be the domain of $\mathbb{A}$ and
\begin{equation*}
J=(\{\sigma(i_1),\dots,\sigma(i_{k_1})\},\{\sigma(i_{{k_1}+1}),\dots,\sigma(i_{k_2})\},\dots,\{\sigma(i_{k_{m}+1}),\dots,\sigma(i_n)\})
\end{equation*}
be the image of $\mathbb{A}$. We define $H_l$ as a quotient group of $\pi_{g,b,n}$, defined by the conditions
\begin{equation*}
u_i=1 \text{ for all }i\not\in\{i_{k_{l}+1},\dots,i_{k_{l+1}}\}, 
\end{equation*}
for all $l=0,\dots,m$, where $k_0=0$, $k_{m+1}=n$, and $K_l$ as a quotient of $\pi_{g,b,n}$, defined by the conditions
\begin{equation*}
u_j=1 \text{ for all }j\not\in\{\sigma(i_{k_{l}+1}),\dots,\sigma(i_{k_{l+1}})\}, 
\end{equation*}
for all $l=0,\dots,m$, where $k_0=0$, $k_{m+1}=n$. Let $w_i$ be a word on letters 
\begin{equation*}
a_1,c_1,\dots,a_g,c_g,v_1,\dots,v_b,u_{j_1},\dots,u_{j_k}.
\end{equation*}
Let $\mathcal{R}\Out\pi_{g,b,n}$ be monoids of isomorphisms 
\begin{equation*}
f_{\mathbb{A}}:H_1\times\dots\times H_m\rightarrow K_1\times\dots\times K_m
\end{equation*}
such that
\begin{equation*}
\begin{cases}
f_{\mathbb{A}}((v_{r_1},\dots,v_{r_m})=(v_{r_1},\dots,v_{r_m}) \text{ for }r_1,\dots,r_m=1,\dots,b, \\
f_{\mathbb{A}}((u_{p_1},\dots,u_{p_m}))=(w_{p_1}^{-1}u_{A_1(i)}w_{p_1},\dots,w_{p_m}^{-1}u_{A_m(i)}w_{p_m}),\\
\,\,\,\,\,\,\,\,\,\,\,\,\,\,\,\,\,\,\,\,\,\,\,\,\,\,\,\,\,\,\,\,\,\,\,\,\,\, \text{ if } p_s \text{ is among } i_{s-1}+1,\dots,i_s \text{ for } s=1,\dots,m .
\end{cases}
\end{equation*}

We consider the case of empty boundary. Let $\pi_{g,n}:=\pi_{g,0,n}$. We will define an equivalence relation in $\mathcal{R}\Aut\pi_{g,n}$.Let $f_1,f_2$ be isomorphisms
\begin{equation*}
\begin{split}
f_1:H_1\times\dots\times H_{m_1}\rightarrow K_1\times\dots\times K_{m_1}, \\
f_2:H_1\times\dots\times H_{m_2}\rightarrow K_1\times\dots\times K_{m_2}.
\end{split}
\end{equation*}
The isomorphisms $f_1$ and $f_2$ are equivalent if $m_1=m_2$ and there exists elements $q_1\in H_1,\dots,q_{m_1}\in H_{m_1}$ such that
\begin{equation*}
f_1((q_1^{-1}x_1q_1,\dots,q_{m_1}^{-1}x_{m_1}q_{m_1}))=f_2((x_1,\dots,x_{m_1})).
\end{equation*}
We denote the quotient monoid by $\mathcal{R}\Out\pi_{g,n}$.

Let us take an arbitrary element $\eta$ of $\mathcal{PM}(S_{g,n})$. It is represented by a homeomorphism of a surface
\begin{equation*}
h:\displaystyle \prod_{s=1}^m  S_{g,n}\backslash\{i_{s-1}+1,\dots,i_s\}\rightarrow\displaystyle \prod_{s=1}^m S_{g,n}\backslash\{\sigma(i_{s-1}+1),\dots,\sigma(i_{s})\}.
\end{equation*}
It defines the bijection $\hat{h}$ between the conjugacy classes of 
\begin{equation*}
\pi_1(\displaystyle \prod_{s=1}^m  S_{g,n}\backslash\{i_{s-1}+1,\dots,i_s\}),
\end{equation*}
and 
\begin{equation*}
\pi_1(\displaystyle \prod_{s=1}^m S_{g,n}\backslash\{\sigma(i_{s-1}+1),\dots,\sigma(i_{s})\}). 
\end{equation*}
We define $\hat{h}$ as an image of $\eta$ in $\mathcal{R}\Out\pi_{g,n}$. Thus we can define a homomorphism of monoids 
\begin{equation*}
\psi_{g,n}:\mathcal{PM}(S_{g,n})\rightarrow \mathcal{R}\Out\pi_{g,n};\eta \mapsto\hat{h}.
\end{equation*} 
The the following theorem holds.
\begin{theorem}
The homomorphism $\psi_{g,n}$ is an isomorphism of monoids.
\end{theorem}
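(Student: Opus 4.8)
The plan is to exploit the layered structure of both $\mathcal{PM}(S_{g,n})$ and $\mathcal{R}\Out\pi_{g,n}$ and reduce the statement, layer by layer, to the Dehn--Nielsen--Baer theorem for inverse mapping class monoids (Theorem \ref{DNBI}). By construction an element $\bm{f}=(f_1,\dots,f_m)$ of $\mathcal{PM}(S_{g,n})$ is a tuple whose $l$-th component $f_l$ is exactly the datum of an element of $\mathcal{IM}(S_{g,n})$ acting on the $l$-th block of the ordered set partition, while the isomorphism $f_{\mathbb{A}}\colon H_1\times\cdots\times H_m\to K_1\times\cdots\times K_m$ acts componentwise, each component being an element of $I\Out\pi_{g,n}$. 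Thus in each layer $\psi_{g,n}$ is literally the isomorphism of Theorem \ref{DNBI}. I would first record that the ordered-set-partition data $(k_1,\dots,k_{m-1};\sigma,\tau)$ attached to $\bm{f}$ is carried to the corresponding data of $f_{\mathbb{A}}$, so that $\psi_{g,n}$ preserves the number of layers and their index sets. Well-definedness then follows by applying in each layer the standard fact that isotopic homeomorphisms induce the same map on conjugacy classes of $\pi_1$ up to inner automorphism; this is precisely the equivalence defining $\mathcal{R}\Out\pi_{g,n}$.

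Next I would verify that $\psi_{g,n}$ is a homomorphism. The product (\ref{seq}) on $\mathcal{PM}(S_{g,n})$ is formed exactly as the $\mathbb{P}M$-product (\ref{prod}): one takes all compositions $f_l\circ h_{l'}$ indexed over pairs of layers and deletes the terms with empty block-intersection. Since $\pi_1$ is functorial, one has $\widehat{f_l\circ h_{l'}}=\hat f_l\circ\hat h_{l'}$ on the relevant quotient group, and the deletion of empty blocks on the geometric side matches the deletion on the target side layer by layer. Hence $\psi_{g,n}(\bm f\cdot\bm h)$ and $\psi_{g,n}(\bm f)\cdot\psi_{g,n}(\bm h)$ agree term by term.

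For bijectivity I would argue layerwise and then assemble. For surjectivity, given $f_{\mathbb{A}}\in\mathcal{R}\Out\pi_{g,n}$ regarded as a tuple $(\bar f_1,\dots,\bar f_m)$ with $\bar f_l\in I\Out\pi_{g,n}$, the surjectivity half of Theorem \ref{DNBI} realizes each $\bar f_l$ by a homeomorphism $f_l\in\mathcal{IM}(S_{g,n})$ inducing the prescribed action on the $l$-th block; since these blocks fit together into the single ordered set partition prescribed by $f_{\mathbb{A}}$, the tuple $\bm f=(f_1,\dots,f_m)$ lies in $\mathcal{PM}(S_{g,n})$ and satisfies $\psi_{g,n}(\bm f)=f_{\mathbb{A}}$. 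For injectivity, suppose $\psi_{g,n}(\bm f)=\psi_{g,n}(\bm g)$; preservation of the layer data forces the two tuples to have the same length $m$ and the same underlying ordered set partition, and in each layer $\hat f_l=\hat g_l$ in $I\Out\pi_{g,n}$. The injectivity half of Theorem \ref{DNBI} then gives an isotopy $F_t^l$ from $f_l$ to $g_l$ for each $l$, and exactly as in the proof of Proposition \ref{pmc} the simultaneous family $\bm F_t=(F_t^1,\dots,F_t^m)$ is an isotopy of $\bm f$ to $\bm g$ through $\mathbb{P}M$-mapping classes, since at every time $t$ the tuple still respects the fixed ordered set partition. Thus $\bm f=\bm g$.

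I expect the genuine obstacle to be the homomorphism property rather than bijectivity. One must check carefully that the term-by-term deletion of empty block-intersections in the product (\ref{seq}) corresponds exactly to the deletion prescribed in $\mathcal{R}\Out\pi_{g,n}$, and that the componentwise action of $f_{\mathbb{A}}$ on $H_1\times\cdots\times H_m$ is compatible with first composing the layer homeomorphisms and only then passing to $\pi_1$. This is the combinatorial bookkeeping of ordered set partitions underlying the $\mathbb{P}M$-structure, and it is the only point where the argument exceeds a layerwise application of Theorem \ref{DNBI}; everything else follows the template already established in Proposition \ref{pmc}.
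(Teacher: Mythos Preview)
Your proposal is correct and follows essentially the same approach as the paper: both reduce the statement to Theorem \ref{DNBI} by observing that, by construction, $\psi_{g,n}$ decomposes layerwise as a disjoint union over ordered set partitions of products of the inverse-monoid maps, and then apply Theorem \ref{DNBI} componentwise. Your version is considerably more explicit about well-definedness, the homomorphism property, and the assembly of the layerwise isotopies (as in Proposition \ref{pmc}), but the underlying idea is identical to the paper's one-line reduction.
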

\begin{proof}
By our construction, we can consider $\psi_{g,n}$ as a map
\begin{equation*}
\psi_{g,n}:\displaystyle \bigsqcup_{\begin{matrix}n=n_1+\dots+n_k \\ k\ge 1 \end{matrix}} \displaystyle \prod_{i=1}^k \mathcal{IM}(S_{g,n_i})\rightarrow \displaystyle \bigsqcup_{\begin{matrix}n=n_1+\dots+n_k \\ k\ge 1 \end{matrix}} \displaystyle \prod_{i=1}^k I\Out\pi_{g,n_i}.
\end{equation*}
This map is an isomorphism by Theorem \ref{DNBI}.
\end{proof}

\end{document}